\theoremstyle{plain}
\newtheorem{Thm}[subsection]{Theorem}
\newtheorem{Prop}[subsection]{Proposition}
\newtheorem{Lem}[subsection]{Lemma}
\newtheorem{Cor}[subsection]{Corollary}
\newtheorem{Rem}[subsection]{Remark}
\newcommand{\comments}[1]{}
\newcommand{\C}{\mathbb{C}}
\newcommand{\N}{\mathbb{N}}
\newcommand{\R}{\mathbb{R}}
\newcommand{\La}{\Lambda}
\newcommand{\lra}{\longrightarrow}
\newcommand{\cE}{{\mathcal E}}
\newcommand{\cL}{{\mathcal L}}
\newcommand{\ces}{{\operatorname{ces}\nolimits}}
\newcommand{\cesp}{{\operatorname{ces(\emph{p})}\nolimits}}
\numberwithin{equation}{section}
\begin{document}
\title[Fr\'{e}chet and (LB) sequence spaces induced by dual Banach spaces]{Fr\'{e}chet and (LB) sequence spaces induced by dual Banach spaces of discrete Ces\`{a}ro spaces}
\author[J.\ Bonet and W.J.\ Ricker]{Jos\'{e} Bonet and Werner J.\ Ricker}

\vspace*{.3cm}

\noindent
\address{J. Bonet, Instituto Universitario de Matem\'{a}tica Pura y Aplicada IUMPA, Universitat Polit\`{e}cnica de Val\`{e}ncia, 46071 Valencia, Spain \newline
Email: jbonet@mat.upv.es}
\address{ W.J. Ricker: Math.-Geogr.\ Fakult\"{a}t, Katholische Universit\"{a}t Eichst\"{a}tt-Ingolstadt, 85072 Eichst\"{a}tt, Germany\newline
Email: werner.ricker@ku.de }
\keywords{Sequence space, Fr\'{e}chet space, (LB)-space, Ces\`{a}ro operator, strong dual}
\subjclass[2010]{Primary: 46A45, Secondary: 46A04, 46A11, 46A13, 47B37}

% \dedicatory{}
\maketitle\footnotetext{This article is accepted for publication in the Bulletin of the Belgian
Mathematical Society Simon Stevin.}

\begin{abstract}
The Fr\'{e}chet $(\mbox{resp.}, (\mbox{LB})\mbox{-})$ sequence  spaces $\ces(p+) := \bigcap_{r > p} \ces(r), 1 \leq p < \infty $ (resp.\
 $ \ces (p\mbox{-}) := \bigcup_{ 1 < r < p} \ces (r),
  1 < p \leq \infty),$ are known to be very different to the classical sequence
spaces $ \ell_ {p+} $ (resp.,  $ \ell_{p_{\mbox{-}}}).$ Both of these classes of  non-normable spaces $ \ces (p+),  \ces (p\mbox{-})$
are defined via the family of reflexive Banach sequence spaces
$ \ces (p), 1 < p < \infty .$ The \textit{dual}\/ Banach spaces $ d (q), 1 <  q < \infty ,$ of the discrete Ces\`{a}ro spaces $ \ces (p), 1 < p < \infty , $
 were studied by  G.\ Bennett, A.\ Jagers and others.
Our aim is to investigate in detail the corresponding sequence  spaces $ d (p+) $ and $ d (p\mbox{-}),$ which have not been considered before.
Some of their properties have similarities with those of  $ \ces (p+), \ces (p\mbox{-})$ but, they also exhibit differences.
For instance, $ \ces (p+)$ is isomorphic to a power series Fr\'{e}chet space of  order 1    whereas $ d (p+) $ is isomorphic to such a space of infinite order.
Every space $ \ces (p+), \ces (p\mbox{-})  $ admits an absolute basis but, none of the spaces $ d (p+),  d (p\mbox{-})$ have any absolute basis.
\end{abstract}

\section{Introduction} \label{S1} Perhaps the most important class of Banach sequence spaces is $ \ell_p , 1 \leq p \leq \infty , $
where $ \ell_p $ is equipped with its usual norm $ \|\cdot \|_p.$ A classical inequality of Hardy, \cite[Theorem 326]{HLP}, states
for $ 1 < p < \infty $ and with $ \frac{1}{p} + \frac{1}{p'} = 1 $ that
$$ \textstyle
 \sum^\infty_{ n = 1} \left( \frac{1}{n} \sum^n_{ k = 1} | x _k | \right) ^p \leq ( p')^p \sum ^\infty _{n = 1} | x_n | ^p , \quad  x = ( x_n ) _n \in \ell_p ,
 $$
  In terms of the Ces\`{a}ro operator $ C : \C ^{\N} \lra \C^{\N} ,$ defined by
 \begin{equation} \label{1.1} \textstyle
  C (x) := ( x _1, \frac{x_1 + x_2}{2} , \ldots , \frac{x_1 + x_2 + \ldots + x_n}{n}, \ldots  ) , \quad x = (x_n ) _{n }\in \C ^{\N}.
  \end{equation}
  Hardy's inequality can be formulated, for $ 1 < p < \infty,$ as
  \begin{equation} \label{1.2}
   \| C (| x | ) \| _p \leq p' \| x \| _p , \quad x \in \ell_p  , \end{equation}
   where $ | x | := ( | x_n | )_n $ for  $ x \in \C^{ \N} .$ Since $ C : \C^{\N}  \lra \C^{\N} $ is a positive operator
   (i.e., $ C (x) \geq 0 ,$ meant in the coordinatewise sense, whenever $ x \geq 0 $ in $ \C^{\N} ) $  and $ | C (x) | \leq C ( | x | ), $ it follows from \eqref{1.2}
   that $ C : \ell_p \lra \ell_p $ is a  continuous linear operator for all $ 1 < p < \infty .$  G.\ Bennett  investigated, in great detail, the closely related
   spaces
   \begin{equation} \label{1.3}
   \cesp := \{ x \in \C^{\N} : C ( | x | ) \in \ell_p \}, \quad 1 < p < \infty ,
   \end{equation}
   which are reflexive  Banach spaces relative to the norm
   \begin{equation}   \label{1.4}
   \| x \| _{\cesp} := \| C ( | x | ) \| _p , \quad x \in \cesp,
   \end{equation}
   and satisfy $ \ell_p \subseteq \cesp ;$ see \cite{B}, as well as \cite{AM1}, \cite{AM2}, \cite{BCD}, \cite{CR1}, \cite{G-E}, \cite{LM}, \cite{MPS},
   and the references therein. The Banach spaces $ \cesp$ have the desirable property (as do the spaces $ \ell_p)$ that they are \textit{solid}\/ in $ \C^{\N} ,$
   that is, if $ x \in \cesp $ and $ y \in \C^{\N} $ satisfy $ | y | \leq | x |, $ then $ y \in \cesp.$

The \textit{dual}\/ Banach spaces $ ( \cesp)' $ of $ \ces(p),  1 < p < \infty ,$ are rather complicated, \cite{J}. A more transparent \textit{isomorphic}\/ identification of
   $  ( \cesp)'$ occurs in \cite[Corollary 12.17]{B}. Indeed, it is shown there that
   \begin{equation} \label{1.5} \textstyle
   d (p) := \{ x \in \ell_ \infty : \hat{x} := ( \sup_{ k \geq n } \, | x_k| )_n \in \ell_p \}, \quad 1 < p < \infty ,
   \end{equation}
is a Banach space for the norm
\begin{equation} \label{1.6}  \textstyle
 \| x \| _{ d (p)} := \| \hat{x}\, \|_p, \quad x \in d (p),
 \end{equation}
 which is isomorphic to $ ( \ces(p'))',$ denoted by $d(p)\simeq ( \ces(p'))'$, with the duality given by
 $$  \textstyle
 \langle u, x \rangle := \sum ^\infty _{ n = 1} u_n x_n , \quad u \in \ces (p'), \quad x \in d (p) .
 $$
 It is clear from \eqref{1.5} and \eqref{1.6} that $ d(p)$ is also \textit{solid}\/ in $ \C^{\N},$ for $ 1 < p < \infty .$
 The Banach spaces $ d (p), 1 < p < \infty ,$ although less prominent than the discrete Ces\`{a}ro spaces $ \cesp, 1 < p < \infty ,$ have received some
 attention; see, e.g., \cite{B}, \cite{BR1}, \cite{BCD}, \cite{G-E}, \cite{J}, \cite{LM}.

Non-normable sequence spaces $ X \subseteq \C^{\N} $ are also an important part of functional analysis; see, for example, \cite{Bi},
 \cite{BMS}, \cite{GS}, \cite{Ja}, \cite{K\"{o}},  \cite{MV}, \cite{W} and the references therein. The classical Fr\'{e}chet spaces $ \ell_{ p+} := \bigcap_{ p < q} \ell_ q,$
 for $  1 \leq p < \infty ,$ are well understood, \cite{Di}, \cite{MM}. Their analogues $ \ces (p+) := \bigcap_{ p < q} \ces(q)$ were  introduced and investigated in
 \cite{ABR1}. The (inductive limit) $ (LB)$\textit{-space}\/ $ \ell_{p_{\mbox{-}}} := \mbox{ind}_{1 < q < p}  \ell_q, $ for $ 1 < p \leq \infty,$ which is the \textit{strong dual}\/ space of $ \ell_{p'+}, $ is also
 well known, \cite{MM}; for a recent study  of certain aspects of this class of spaces, see \cite{ABR2}. The analogous class of (LB)-spaces $ \ces(p\mbox{-}) := \mbox{ind}_{1 < q < p}  \ces(q) $, again an inductive limit, 
 is also analyzed in \cite{ABR2}. The purpose of this note is to introduce and identify properties of the Fr\'{e}chet spaces $ d (p+), 1 \leq p < \infty ,$ and of the  (LB)-spaces
 $ d (p{\mbox{-}}), 1 < p \leq \infty .$ These spaces have not been considered before.

 In the following Section  \ref{S2} we collect various relevant aspects of the Banach spaces $ \ell_p, \cesp $ and $ d (p), $ for $ 1 < p < \infty,$ which are needed in the sequel.
 Various inclusions between these spaces are recorded as well as some of the properties of the Ces\`{a}ro operator when it acts between pairs of such Banach spaces.

 Section \ref{S3}  summarizes some known properties of the non-normable sequence spaces $ \ell_{p+}, \ces (p+)$ and $ \ell_{p_{\mbox{-}}}, \ces (p\mbox{-}).$ Several new facts
 (cf. parts (b) of Proposition \ref{P3.3}(i), (ii), parts (ii), (iv), (v) of Proposition \ref{P3.4} and Proposition \ref{P3.6}) concerning these spaces are also
 established.

 Section \ref{S4} is the main one of this note; it exposes properties of the (solid) Fr\'{e}chet spaces $ d (p+), 1 \leq p < \infty ,$ and  of the (solid)  (LB)-spaces
 $ d (p\mbox{-}), 1 < p \leq \infty ,$ and compares them with those of the corresponding spaces $ \ell_{p+}, \ces (p+) $ and $ \ell_{p_{\mbox{-}}}, \ces (p\mbox{-}). $

\section{Preliminaries}  \label{S2}

The non-normable sequence spaces  $ \ell_{ p+}, \ces(p+), d (p+) $ and $ \ell_{p_{\mbox{-}}}, \ces (p\mbox{-}), d (p\mbox{-}) $ are assembled from the  Banach spaces $ \ell_p, \ces (q), d (s) $
for $ 1 < p, q, s < \infty .$ So, we begin by collecting some facts about various inclusions amongst these Banach spaces. For each $ n \in \N ,$ let  $ e_n := ( \delta _{kn})_k . $

\begin{Prop} \label{P2.1} \begin{itemize} \item[{\rm (i)}]
Each of the Banach spaces $ \ell_p, \ces(p) $ and $ d (p),$ for $ 1 < p < \infty ,$ is separable, reflexive and the canonical vectors $ \{ e_n : n \in \N \} $ form an unconditional9
basis. Moreover, $ ( \ell_p ) ' \simeq \ell_{p'} $ (isometrically) and $  ( \ces(p) )' \simeq d (p') .$

\item[{\rm (ii)}] Let $ 1 < p, q < \infty .$
\begin{itemize}
\item[{\rm (a)}] The inclusion $ \ell_p \subseteq \ell_q $ is satisfied and continuous if and only if $ p \leq q .$ Moreover, the inclusion is never compact. If $ p < q, $  then
$ \ell _p \subsetneqq \ell_q.$

\item[{\rm (b)}] The inclusion $ \ces (p) \subseteq \ces (q) $ is satisfied and continuous if and only if $ p \leq q.$ Moreover, the inclusion is compact if and only if $ p < q ,$
in which case $ \ces(p) \subsetneqq \ces (q). $

\item[{\rm (c)}] The inclusion $ d (p) \subseteq d (q)$ is satisfied and continuous if and only if $ p \leq q .$ Moreover, the inclusion is compact if and only if $ p < q,  $
in which case $ d (p) \subsetneqq d (q).$

\item[{\rm (d)}] The inclusion $ \ell_p \subseteq \ces (q) $ is satisfied and continuous if and only if $ p \leq q,$ in which case  $ \ell_p \subsetneqq \ces (q). $
Moreover, the inclusion is compact if and only if $ p < q .$

\item[{\rm (e)}] The inclusion $ d (p) \subseteq \ell_q $ is satisfied and continuous if and only if $ p \leq q,$ in which case $ d (p) \subsetneqq \ell_q.$ Moreover, the inclusion
is compact if and only if $ p < q .$

\item[{\rm (f)}] The inclusion $ d (p) \subseteq \ces (q) $ is satisfied and continuous if and only if $ p \leq q ,$ in which case $ d (p) \subsetneqq \ces (q). $
Moreover, the inclusion is compact if and only if $ p < q.$
\end{itemize}
\end{itemize}
\end{Prop}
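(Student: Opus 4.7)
The plan is to assemble the proposition from three sources: classical facts about $\ell_p$, the body of results on $\ces(p)$ collected in Bennett \cite{B} and Jagers \cite{J}, and the duality $d(p)\simeq (\ces(p'))'$ recorded in the introduction as a consequence of \cite[Corollary 12.17]{B}. Each inclusion in (ii) (a)--(f) is reduced to a monotonicity statement for $\ell_p$-norms applied to a nonnegative auxiliary sequence ($x$, $C(|x|)$, or $\hat x$ as appropriate).

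For (i), the properties of $\ell_p$ are textbook. For $\ces(p)$ they are in \cite{B}; the identification $(\ces(p))'\simeq d(p')$ is \cite[Corollary 12.17]{B}. The properties of $d(p)$ then follow from this duality: reflexivity of $\ces(p')$ passes to its dual, and a reflexive Banach space is separable if and only if its dual is. That $\{e_n\}$ is a Schauder basis of $d(p)$ I would verify directly from (1.5)--(1.6): if $x\in d(p)$ and $x^{(N)}:=\sum_{k\le N}x_k e_k$, then the coordinates of $\widehat{x-x^{(N)}}$ coincide with those of $\hat x$ for $n>N$ and vanish for $n\le N$, so $\|x-x^{(N)}\|_{d(p)}^p=\sum_{n>N}\hat x_n^p\to 0$. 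Unconditionality is then immediate from the solidity of $d(p)$ evident in (1.5)--(1.6).

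For (ii), continuity of each inclusion is automatic by the closed graph theorem or from explicit norm estimates. Strict inclusion when $p<q$ is witnessed by $x_n=n^{-1/p}$ (or variants exploiting the sharp asymptotics $\|e_n\|_{\ces(p)}\asymp n^{-(p-1)/p}$ and $\|e_n\|_{d(p)}=n^{1/p}$). Non-compactness when $p=q$ in (a) follows from $\|e_n-e_m\|_p=2^{1/p}$; the corresponding non-compactness in (b)--(f) follows by normalizing $\{e_n\}$ in the smaller space and noting that its norm in the larger space stays bounded below.

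The genuinely delicate point, which I expect to be the main obstacle, is the compactness direction when $p<q$ in (b) and (c); the remaining compactness statements (d)--(f) follow by composing a compact inclusion with a continuous one. For $\|x\|_{\ces(p)}\le 1$ the sequence $a:=C(|x|)$ satisfies $n\mapsto n a_n$ non-decreasing together with $\|a\|_p\le 1$, and the two together force the pointwise bound $a_n\lesssim n^{-1/p}$. Hence for $q>p$,
\begin{equation*}
\sum_{n>N}a_n^q=\sum_{n>N}a_n^p\,a_n^{q-p}\lesssim N^{-(q-p)/p}\sum_{n>N}a_n^p\lesssim N^{-(q-p)/p}.
\end{equation*}
A short calculation comparing $C(|Q_N x|)_n$ with $a_n$ (where $Q_N$ annihilates the first $N$ coordinates) shows that the $\ces(q)$-norm of $Q_N x$ is controlled by $\bigl(\sum_{n>N}a_n^q\bigr)^{1/q}$, giving a tail estimate uniform over the unit ball of $\ces(p)$. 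Combined with finite-dimensional approximation by the sections $P_N x$, this yields relative compactness of the unit ball of $\ces(p)$ in $\ces(q)$. The non-increasing property of $\hat x$ furnishes an entirely parallel argument for (c). These arguments are present in spirit in \cite{B}, \cite{CR1}, \cite{BCD}; the main step in writing up the proof will be to verify that they transfer cleanly between the $\ces$ and $d$ scales.
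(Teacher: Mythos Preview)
The paper does not actually prove Proposition~\ref{P2.1}; it assembles it entirely from citations, chiefly to \cite{ABR3} and \cite{BR1} (and \cite{B}, \cite{J}, \cite{CR1} for part~(i)). Your proposal instead supplies direct arguments, and the core of it---the compactness of the inclusions in (b)--(f) for $p<q$ via the uniform tail estimate coming from $a_n\lesssim n^{-1/p}$ (using that $n\,C(|x|)_n$ is non-decreasing, respectively that $\hat x$ is non-increasing)---is correct and is, in effect, what the references \cite{ABR3}, \cite{BR1} contain. So on the substantive point your route and the paper's coincide once one unpacks the citations; your write-up has the advantage of being self-contained.

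There is, however, a genuine gap in your treatment of \emph{non}-compactness at $p=q$ for (d), (e), (f). Your proposed witness---normalize $e_n$ in the smaller space and observe that its norm in the larger space stays bounded below---fails in all three cases. For (d) with $p=q$: $\|e_n\|_{\ell_p}=1$ while $\|e_n\|_{\ces(p)}\asymp n^{-1/p'}\to 0$, so the sequence converges to $0$ in $\ces(p)$ and proves nothing. For (e): $e_n/\|e_n\|_{d(p)}=n^{-1/p}e_n$ has $\ell_p$-norm $n^{-1/p}\to 0$. Case (f) is similar. The remedy is to use disjoint \emph{block} vectors rather than single $e_n$'s: for instance $y^{(k)}:=2^{-k/p}\sum_{2^k<j\le 2^{k+1}}e_j$ has $\|y^{(k)}\|_{\ell_p}=1$ and a direct computation gives $\|y^{(k)}\|_{\ces(p)}\asymp 1$ uniformly in $k$; since the $y^{(k)}$ have disjoint supports, any $\ces(p)$-limit would have to be $0$, contradicting the lower norm bound. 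Analogous block constructions handle (e) and (f). Once you replace the single-vector witness by blocks, the rest of your outline goes through.
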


Let us indicate where the various parts of Proposition \ref{P2.1} occur in the literature.

(i) \  For the $  \ell_p$-spaces the claims are well known. According to  \cite[p.61]{B},  \cite[Proposition 2]{J},  the spaces $ \ces(p), 1 < p < \infty ,$ are reflexive.
It was noted in Section  \ref{S1} that $ d (p)$ is isomorphic to $ ( \ces (p'))'$ and hence, also the spaces $ d (p), 1 < p < \infty,$ are reflexive. For  the unconditionality of
$ \{ e_n : n \in \N \} $ as a basis for $ \ces(p)$ we refer to  \cite[Proposition 2.1]{CR1} and for $ d (p) $ to  \cite[Proposition 2.1]{BR1}. It is then clear that the spaces
$ \ces (p), d (p), $ for $ 1 < p < \infty ,$ are separable.

(ii)  \  (a) \  The claims in this setting are all well known.

(b) \ For the first statement, see  \cite[Proposition 3.2(iii)]{ABR3}, and for the statement about compactness we refer to \cite[Proposition 3.4(ii)]{ABR3}.
The discussion prior to Proposition 3.3 in \cite{ABR3} shows that  $ \ces(p) \subsetneqq \ces (q) $ whenever $ p < q .$

(c) \  The first statement occurs in \cite[Proposition 5.1(iii)]{BR1} and the statement concerning compactness can be found in \cite[Proposition 5.2(iii)]{BR1}.
Proposition 2.7(ii) of \cite{BR1} reveals that $ d (p) \subsetneqq d (q) $ whenever $ p < q .$

(d) \ For the first statement we refer to \cite[Proposition 3.2(ii)]{ABR3} and \cite[Remark 2.2(ii)]{CR1}. Concerning the compactness statement, see  \cite[Proposition 3.4(iii)]{ABR3}.

(e) \ For the first statement, see Propositions 2.7(v) and 5.1(ii) in \cite{BR1}, and for the statement concerning compactness  we refer to \cite[Proposition 5.2(ii)]{BR1}.

(f) \  Proposition 5.1(i) of \cite{BR1} shows that $ d (p) \subseteq  \ces (q),$ with continuity of the inclusion, if and only if $ p \leq q . $ Moreover, if $ p \leq q, $ then part  (e)
yields that $ d (p) \subsetneqq \ell_q .$ Since $ \ell_p \subseteq \ces (q) $ by part (d), it follows that also $ d (p) \subsetneqq \ces (q).$  For the statement about compactness
we refer to \cite[Proposition 5.2(i)]{BR1}.

It is also relevant to clarify the non-isomorphic nature between the various Banach spaces  $ \ell_p, \ces (q)$ and $ d (s), $ for $ 1 < p, q, s < \infty .$

\begin{Prop} \label{P2.2} Let $ 1 < p, q < \infty .$
\begin{itemize}
\item[{\rm (i)}] The Banach spaces $ \ell_p$ and $  \ell_q$ are not isomorphic whenever  $ p  \neq q.$

\item[{\rm (ii)}] The Banach spaces $ \ces (p)$ and $ \ces (q)$ are not isomorphic whenever $ p \neq q.$

\item[{\rm (iii)}] The Banach spaces $ d (p) $ and $ d (q) $ are not isomorphic whenever $ p  \neq q.$

\item[{\rm (iv)}] The Banach spaces $ d (p) $ and $  \ces (q) $ are not isomorphic whenever $ p \neq q .$

\item[{\rm (v)}] The Banach spaces $ \ces (p) $ and  $ \ell_q $ are not isomorphic.

\item[{\rm (vi)}]  The Banach spaces $ d (p)$ and $ \ell_q $ are not isomorphic.
\end{itemize}
\end{Prop}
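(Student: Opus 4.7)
Proposition \ref{P2.2} collects non-isomorphism results of varying difficulty; the plan is to dispose of each item by either a direct citation of the relevant literature or a short derivation combining Pitt's theorem, the duality $d(p)\simeq(\ces(p'))'$ recorded in Section \ref{S1}, and standard Banach-lattice invariants (cotype, the optimal $p$-concavity exponent).

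For part (i) the classical invocation is Pitt's theorem: every bounded operator $\ell_p\to\ell_q$ with $q<p$ is compact, so an isomorphism for $p\neq q$ would make the identity of an infinite-dimensional $\ell_p$ compact, a contradiction. For parts (v) and (vi), the decisive input is the rigidity result of Bennett \cite{B} for $\ces(p)$ and its analogue from \cite{BR1} for $d(p)$: the canonical basis of neither space is equivalent to any $\ell_q$-basis, so $\ces(p)\not\simeq\ell_q$ and $d(p)\not\simeq\ell_q$ for all admissible indices. The proofs in \cite{B} and \cite{BR1} rely on block-basis computations exploiting that the Ces\`aro operator averages coordinates over growing blocks, producing norm asymptotics which no $\ell_q$-basis can match.

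Parts (ii) and (iii) follow by comparing a suitable isomorphism invariant across the family. Hardy's inequality identifies $\ces(p)$ and $d(p)$ as $p$-concave Banach lattices with optimal concavity exponent $p$, and since the cotype (and, in the appropriate range, the optimal concavity exponent itself) is an isomorphism invariant, different values of $p$ cannot be matched; the block-basis arguments in \cite{B}, \cite{AM1}, \cite{AM2} and \cite{BR1} cover the complementary ranges. Part (iv) then follows by combining (ii) and (iii) with duality: an isomorphism $d(p)\simeq \ces(q)$ would, by reflexivity and Proposition \ref{P2.1}(i), yield $\ces(p')\simeq d(q')$ upon passing to duals, and a concavity comparison forces $p'=q'$, i.e.\ $p=q$, contradicting the hypothesis $p\neq q$.

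The main obstacle is (v) (and, correspondingly, (vi)), since ruling out isomorphism with every $\ell_q$ requires a genuine structural computation rather than a soft invariant. Once these rigidity results are accepted as black boxes from the literature, the remaining cases reduce to Pitt's theorem, duality, and the matching of lattice invariants standard in the theory of sequence Banach lattices.
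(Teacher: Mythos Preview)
The paper's own treatment of Proposition~\ref{P2.2} is purely bibliographic: each item is dispatched by a one-line citation (to \cite{LZ}, \cite{ABR3}, \cite{BR1}, \cite{B}), with no argument given. Your proposal is more ambitious in that it tries to sketch actual reasons, but several of those sketches have genuine gaps.

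For (v) and (vi), writing ``the canonical basis of $\ces(p)$ is not equivalent to any $\ell_q$-basis, so $\ces(p)\not\simeq\ell_q$'' is not a valid inference on its own: a Banach space can have many non-equivalent unconditional bases. The missing ingredient is the Lindenstrauss--Pe{\l}czy\'nski--Zippin theorem that $\ell_q$ (for $1\le q<\infty$) has a \emph{unique} unconditional basis up to equivalence and permutation; only with this in hand does non-equivalence of the canonical basis to the $\ell_q$-basis force non-isomorphism. You should state this explicitly, or else simply cite \cite[Proposition~15.13]{B} and \cite[Proposition~2.7(iv)]{BR1} as the paper does.

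For (ii), (iii) and (iv) the invariant you propose does not do the job. The optimal $p$-concavity exponent is a \emph{Banach-lattice} invariant, preserved only under lattice isomorphisms, so it cannot by itself rule out a mere Banach-space isomorphism. Passing to cotype (which \emph{is} an isomorphism invariant) gives only $\max(p,2)$, so it fails to separate any two indices in $(1,2]$; your phrase ``block-basis arguments \dots\ cover the complementary ranges'' is not a proof. Finally, the duality step you propose for (iv) is circular: from $d(p)\simeq\ces(q)$ you pass to $\ces(p')\simeq d(q')$ and then invoke ``a concavity comparison'' to conclude $p'=q'$ --- but that comparison is exactly the assertion $d(a)\simeq\ces(b)\Rightarrow a=b$ that you are trying to establish. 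For these three items the honest route, as in the paper, is to cite \cite[Proposition~3.3]{ABR3} for (ii) and \cite[Propositions~2.7(ii), 2.9(ii)]{BR1} for (iii), (iv), where the actual work (a type/cotype analysis refined by the structure of the spaces) is carried out.
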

For the statements (i) - (vi) in Proposition \ref{P2.2} we refer successively to  \cite[p.54 of Vol.I]{LZ}, to \cite[Proposition 3.3]{ABR3}, to \cite[Proposition 2.7(ii)]{BR1}, to
\cite[Proposition 2.9(ii)]{BR1}, to \cite[Proposition 15.13]{B}, and to  \cite[Proposition 2.7(iv)]{BR1}.

The Ces\`{a}ro operator $ C :\C^{\N}  \lra \C^{\N} $ is a topological isomorphism of the Fr\'{e}chet space $ \C^{\N} $ onto itself, where $ \C^{\N} $ is equipped with the
coordinatewise convergence topology. It was noted in Section \ref{S1} that $ C : \ell_p \lra \ell_p $ is continuous for $ 1 < p < \infty .$ The same is true for
$ C : d (p) \lra d (p), $ \cite[Proposition 3.2(i)]{BR1}, and for $  C : \ces (p) \lra \ces(p);$  see the proof of Theorem 5.1 in \cite{CR1}. In relation  to
the Ces\`{a}ro operator $ C ,$
 the Banach spaces $ \ces (p), 1 <  p < \infty ,$ have a remarkable property, \cite[Theorem 20.31]{B}. Namely, for $ x \in \C ^{\N}, $
\begin{equation}  \label{2.1}
C ( | x | ) \in \ces (p) \  \ i\!f \ and \  only \  i\!f  \ \ x \in \ces(p),
\end{equation}
which immediately implies that
\begin{equation} \label{2.2}
C^2 ( | x | ) \in \ces(p) \ \ i\!f \  and  \ only \  i\!f  \  \ C ( | x |)  \in \ces(p).
\end{equation}
In view of \eqref{1.4}, the criterion \eqref{2.1} can be equivalently formulated, for  $ 1 < p < \infty, $ as
\begin{equation} \label{2.3}
C^2 ( | x | ) \in \ell_p \ \  i\!f \ and \  only \  i\!f \ \  C ( | x | ) \in \ell_p .
\end{equation}
For each $ 1 < p < \infty ,$ it is also known, for $ x \in \C ^{\N},$ that
\begin{equation} \label{2.4}
 C^2 ( | x | ) \in d (p) \ \  i\!f \  and \  only \  i\!f \ \  C ( | x | ) \in d (p),
 \end{equation}
 \cite[Proposition 3.7]{BR1}. The criteria \eqref{2.2}, \eqref{2.3} and  \eqref{2.4} reveal a surprising feature of the Ces\`{a}ro operator $ C $ when it acts on one
 of the Banach spaces $ \ell_p , \ces(p) $ or $  d (p), $ for $ 1 < p< \infty.$ For $ \ces (p)$ this is perhaps understandable, in view of  \eqref{1.3}  and
 \eqref{1.4}. However, the Banach spaces $ \ell_p$  and also $ d (p), $ as defined by \eqref{1.5} and \eqref{1.6}, have apriori no connection to $ C .$
 Actually more is true.

 For each $ 1< p < \infty ,$ let respectively $ [C, \ell_p]_s,  [C, d (p)]_s $ and  $  [C, \ces(p)]_s $ denote the \textit{largest solid}\/ vector space
 $ X \subseteq \C^{\N} $ such that  respectively $ C (X) \subseteq \ell_p, C (X) \subseteq d (p) $ and $ C (X) \subseteq \ces (p) .$ Relevant here is
 that $ C : \ces(p) \lra \ell_p $ continuously, which is clear from \eqref{1.4} and the inequality $  | C (x) | \leq C ( | x | ) ,$ that $ C : \ell_p \lra
 d (p) $ continuously, \cite[Proposition 3.4]{BR1}, and that $  C : \ces(p) \lra d (p) $ continuously, \cite[Corollary 3.8]{BR1}, which shows, in turn,
 that $ \ces(p) \subseteq [C, \ell_p ]_s, $ that $ \ell_p \subseteq [C, d (p)]_s,$ and that even $ \ces (p) \subseteq [C, d (p)]_s.$ Of course, also
 $ \ces(p) \subseteq [C, \ces (p)]_s $ as $ C $ maps $ \ces (p) $ into itself. Actually  the following equalities are valid:
 \begin{equation} \label{2.5}
 [C, \ell_p]_s = [C, d (p)]_s = [C, \ces(p)] = \ces (p), \quad 1 < p < \infty  .
 \end{equation}
 Indeed, for the fact that $ [C,  \ell_p]_s = [C, \ces (p)]_s = \ces(p), $ see p.62 and Theorem 2.5 of \cite{CR1}, and for the equality
 $ [C, d (p)]_s = \ces (p) $ we refer to \cite[Corollary 3.9]{BR1}.

 We end this section with  some inequalities needed later.

 \begin{Lem} \label{L2.3} Let $ 1 < p < \infty $ and $ n \in \N $ be arbitrary.

 \begin{itemize}
 \item[{\rm (i)}] $ | x _n | \leq n \,  \| x \|_{\ces(p)}, \quad x \in \ces (p) .  $

 \item[{\rm (ii)}]  $ | x_n| \leq \| x \| _{ d (p)}, \quad x \in  d (p) . $
 \end{itemize}
 \end{Lem}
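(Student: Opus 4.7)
The plan is to read both inequalities directly off the definitions of the norms in \eqref{1.4} and \eqref{1.6}, using the elementary fact that the $n$-th coordinate of any sequence in $\ell_p$ is dominated by its $\ell_p$-norm. No deeper structure of $\ces(p)$ or $d(p)$ is needed, and I do not expect a serious obstacle; the main care required is just keeping track of the two normalising factors (the $\tfrac{1}{n}$ in the Ces\`{a}ro average, and the $\sup$ in the tail).

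For part (i), I would fix $n \in \N$ and $x \in \ces(p)$. By \eqref{1.1}, the $n$-th coordinate of $C(|x|)$ equals $\frac{1}{n}\sum_{k=1}^n |x_k|$, and this is at least $\frac{|x_n|}{n}$ since all summands are nonnegative. Since $C(|x|) \in \ell_p$ by \eqref{1.3}, we have $|C(|x|)_n| \leq \|C(|x|)\|_p$, and by \eqref{1.4} the right-hand side is exactly $\|x\|_{\ces(p)}$. Multiplying through by $n$ gives the stated bound.

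For part (ii), fix $n \in \N$ and $x \in d(p)$. By \eqref{1.5}, the $n$-th coordinate of $\hat{x}$ is $\sup_{k \geq n}|x_k|$, which in particular dominates $|x_n|$ (take $k = n$). Since $\hat{x} \in \ell_p$, the coordinatewise estimate $|\hat{x}_n| \leq \|\hat{x}\|_p$ combined with \eqref{1.6} yields $|x_n| \leq \hat{x}_n \leq \|\hat{x}\|_p = \|x\|_{d(p)}$, as required. In both cases the proof is essentially a one-line chain of inequalities, and the only ``content'' is the observation that the definitions of the $\ces(p)$- and $d(p)$-norms already encode the estimates on individual coordinates.
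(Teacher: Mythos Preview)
Your argument is correct and essentially identical to the paper's: for (i) you bound $|x_n|/n$ by the $n$-th coordinate of $C(|x|)$ and then by $\|C(|x|)\|_p=\|x\|_{\ces(p)}$, and for (ii) you bound $|x_n|$ by $\hat{x}_n$ and then by $\|\hat{x}\|_p=\|x\|_{d(p)}$, exactly as in the paper (which phrases the second step via the pointwise inequality $|x_ne_n|\le\hat{x}$ followed by taking $\ell_p$-norms).
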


 \begin{proof}  \ (i) \ Given $ x \in \ces (p) $ note that
 $$ \textstyle
 | x_n | \leq \sum^n _{ k = 1}  \, | x_k | = n ( \frac{1}{n} \sum^n _{k = 1} \, | x_k | ) = n ( C ( | x | ))_n \leq n \,
 \| C ( | x | ) \| _p = n  \, \|  x \| _{\ces (p)}.
 $$
 (ii) \ Fix $ x \in d (p) .$ It is clear from the definition of $ \hat{x} $  (cf.\ \eqref{1.4}) that $ 0 \leq | x _n e_n | \leq
 | x | \leq | \hat{x} | ,$ from which it follows that $
 | x_n  | = \| x_n e_n \| _p \leq \| \hat{x}\|_p = \| x \| _{d (p)} .
 $ \end{proof}

 \section{The non-normable spaces $ \ell_ {p+}, \ces (p+) $  and $ \ell_{p_{\mbox{-}}}, \ces (p{\mbox{-}})$ }  \label{S3}

 In this section we collect some facts on certain non-normable sequence spaces in $ \C^{\N}.$ Given
 $ 1 \leq p < \infty ,$ consider any strictly decreasing sequence
 \begin{equation} \label{3.1}
 \{ p_k \} ^\infty _{ k = 1} \subseteq ( p, \infty ) \quad \mbox{with} \quad p_k \downarrow p .
 \end{equation}
 Then $ \ell_{p+} := \bigcap _{ q > p} \ell_q = \bigcap^\infty _{ k = 1}  \ell_{p_k} $ is a Fr\'{e}chet space
 relative to the increasing sequence of norms on $ \ell_{ p+} $ given by $ \| \cdot \| _{p_k}  :
 x \longmapsto \| x \| _{p_k} ,$ for  $ k \in \N,$  \cite[Ch.1, \S 2]{GS}, \cite[\S 30.8]{K\"{o}}. Since each Banach space
 $ \ell_q, 1 < q < \infty , $ is solid, it is clear that $ \ell_{p+} $ is also solid in $ \C^{\N}. $  Similarly,
 $ \ces (p+) := \bigcap _{q > p} \ces (q) = \bigcap^\infty _{ k = 1} \ces (p_k) $ is a (solid) Fr\'{e}chet space relative
 to the increasing sequence of norms on $ \ces (p+) $ given by
 $ \| \cdot \| _{ \ces (p_k)} : x \longmapsto \| x \| _{\ces (p_k)} , $ for $ k \in \N .$ Clearly
 $$
 \ell_p \subseteq \ell_{p+} \subseteq \C^{\N}, \quad 1 \leq p < \infty ,
 $$
 with continuous inclusions; here Proposition \ref{P2.1}(ii)(a) is relevant. Also
 $$
 \ces(p) \subseteq \ces (p+) \subseteq \C^{\N} , \quad 1 \leq p < \infty ,
 $$
 with continuous inclusions (cf.\ Proposition \ref{P2.1}(ii)(b)).

 \begin{Prop} \label{P3.1} Let $ 1 \leq  p < \infty .$ The canonical vectors $ \{ e_n : n \in \N \} $  form an unconditional basis
 for $ \ell_{ p+} .$  Moreover, $ \ell_{p+} $ is reflexive but not Montel. With continuous inclusions we have
 \begin{equation} \label{3.2}
  \ell_{ p+}  \subseteq \ell_{q+} , \quad 1 \leq p \leq q < \infty .
  \end{equation}
 For every distinct pair $ p, q \in [1,\infty) $ the Fr\'{e}chet spaces $ \ell_{p+} $ and $ \ell_{q+} $
 are not isomorphic. In particular, the containment \eqref{3.2}  is proper whenever $  p < q .$
 \end{Prop}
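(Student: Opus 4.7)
The plan is to handle each assertion in turn, leveraging the projective-limit structure $\ell_{p+} = \bigcap_k \ell_{p_k}$ together with Proposition \ref{P2.1}. First I would dispatch the four easier pieces: (a) since $\{e_n\}$ is an unconditional basis of each Banach space $\ell_{p_k}$ and the topology of $\ell_{p+}$ is generated by the norms $\|\cdot\|_{p_k}$, the partial sums $\sum_{n\leq N} x_n e_n$ converge to $x$ in every $\|\cdot\|_{p_k}$-norm (uniformly in sign/permutation choices), so $\{e_n\}$ is an unconditional basis of $\ell_{p+}$; (b) realising $\ell_{p+}$ as the closed diagonal of the product $\prod_k \ell_{p_k}$ of reflexive Banach spaces makes it a reflexive Fréchet space; (c) the sequence $(e_n)_n$ has $\|e_n\|_{p_k} = 1$ for every $n,k$ and is therefore bounded in $\ell_{p+}$, but $\|e_n - e_m\|_{p_k} = 2^{1/p_k}$ prevents any Cauchy subsequence, so $\ell_{p+}$ is not Montel; (d) for $p \leq q$ and each defining norm $\|\cdot\|_{q_j}$ of $\ell_{q+}$ some $p_k \leq q_j$ works, and Proposition \ref{P2.1}(ii)(a) gives $\|x\|_{q_j} \leq \|x\|_{p_k}$, so \eqref{3.2} is a continuous inclusion.

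The main obstacle is the non-isomorphism, which I would attack by a Pitt-theorem factorization. Suppose, for a contradiction, that $p < q$ and $T\colon \ell_{p+}\to \ell_{q+}$ is a topological isomorphism. Fix $k$ large enough that $p_k < q$. By continuity of $T^{-1}$ there exist $j$ and $C > 0$ with $\|T^{-1}y\|_{p_k} \leq C\|y\|_{q_j}$ for every $y \in \ell_{q+}$; since $c_{00} \subseteq \ell_{q+}$ is dense in $\ell_{q_j}$, $T^{-1}$ extends to a bounded operator $S\colon\ell_{q_j} \to \ell_{p_k}$. Because $q_j > q > p_k$, Pitt's theorem forces $S$ to be compact. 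By continuity of $T$ applied to the seminorm $\|\cdot\|_{q_j}$, there exist $k'\geq k$ and $C' > 0$ with $\|Tx\|_{q_j} \leq C'\|x\|_{p_{k'}}$, yielding a bounded extension $\widetilde T\colon \ell_{p_{k'}} \to \ell_{q_j}$. The composition $S\widetilde T\colon \ell_{p_{k'}} \to \ell_{p_k}$ is then compact and, by agreeing with the identity on the dense subspace $c_{00}$, coincides with the natural inclusion $\ell_{p_{k'}}\hookrightarrow \ell_{p_k}$ (defined because $p_{k'} \leq p_k$). This contradicts Proposition \ref{P2.1}(ii)(a), which asserts that such an inclusion is never compact.

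Properness of \eqref{3.2} when $p < q$ is then immediate: the continuous inclusion $\ell_{p+} \subseteq \ell_{q+}$ cannot be surjective, for otherwise the open mapping theorem would promote it to an isomorphism, contradicting the previous paragraph. The decisive choices in the plan are $k$ large enough that $p_k < q$ (so Pitt applies to $S$, whose source exponent $q_j$ exceeds its target exponent $p_k$) and $k' \geq k$ (so that $p_{k'} \leq p_k$ and the resulting inclusion $\ell_{p_{k'}} \hookrightarrow \ell_{p_k}$ is well-defined and non-compact); getting these compatible is the only subtle ingredient, since every other step is standard projective-limit bookkeeping.
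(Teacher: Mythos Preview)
Your argument is correct, and every step checks out: the projective-limit bookkeeping for the basis, reflexivity, and continuous inclusion is standard; the non-Montel argument via the separated bounded sequence $(e_n)_n$ is clean; and the Pitt-theorem factorization for non-isomorphism is carried out carefully, with the compatibility conditions $p_k<q<q_j$ and $k'\geq k$ arranged exactly as needed so that $S\widetilde T$ is a compact operator agreeing with the (non-compact) inclusion $\ell_{p_{k'}}\hookrightarrow\ell_{p_k}$ on the dense subspace $c_{00}$.

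The paper, by contrast, does not argue any of this directly: it simply records that the claims are known and cites the literature---Dubinsky \cite{D} for non-Montel, \cite[Proposition 26(i)]{ABR5} for the inclusion, and \cite[Proposition 3.3]{ABR1} for the non-isomorphism---and then derives proper containment from non-isomorphism via the open mapping theorem exactly as you do. Your route is therefore genuinely different in being self-contained: the Pitt-theorem argument is a hands-on substitute for the cited non-isomorphism result, and your explicit bounded-but-separated sequence replaces the appeal to \cite{D}. The trade-off is that the paper's approach is shorter and keeps the focus on the new material, while yours exposes the underlying mechanism (compactness of operators between $\ell_r$-spaces with mismatched exponents) that actually distinguishes the $\ell_{p+}$ from one another. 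One minor point: you treat only the case $p<q$, but the case $q<p$ follows by applying the same argument to $T^{-1}$.
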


 Some comments on Proposition \ref{P3.1} are in order.  That $ \{ e_n : n \in \N \} $  is an unconditional basis
 is well known and is a consequence of the fact that this is the case for each Banach space
 $ \ell_q, 1 < q < \infty ; $ see Proposition \ref{P2.1}(i). The same is true for the reflexivity of $ \ell_{p+} $
 as each space  $ \ell_q $ is reflexive for  $ 1 < q < \infty , $ \cite[Proposition 25.15]{MV}. According to  \cite{D}
 the space $ \ell_{p+} $ is \textit{not Montel}. For the continuity of the inclusions in \eqref{3.2},  see
 \cite[Proposition 26(i)]{ABR5}. According to Proposition 3.3  of \cite{ABR1} the spaces $ \ell_{p+} $  and $ \ell_{q+} $
 are not isomorphic whenever $ p \neq q . $  Finally, if there exist $ p <  q $ such that $ \ell_{p+} = \ell_{q+}, $ then the continuity of the
 inclusion \eqref{3.2} and the open mapping theorem for Fr\'{e}chet spaces,
 \cite[Theorem 24.30]{MV},  would imply that $ \ell_{p+} $ and $ \ell_{q+}$
 are isomorphic, which is not the case.

 The properties of the Fr\'{e}chet spaces $ \ces (p+), $  which we now record,  are very different to those of $ \ell_{p+}.$ For the remainder of this note we
 define $ \alpha \in \C^{\N} $ by  $ \alpha := ( \log (n))_n.$ The definition of a \textit{power series}\/ sequence space
 of order 2  can be found in \cite[IV Section 29]{MV}, for example.
 The power series spaces of order $ p $ for any $ p \in [1, \infty] $ are defined similarly; see e.g.,  \cite{V}.

\begin{Prop}\label{P3.2} \begin{itemize} \item[{\rm (i)}] Let $ 1 \leq p < \infty .$ \textit{The canonical vectors}\/ $ \{ e_n : n \in \N \} $  \textit{form an unconditional basis
for}\/ $  \ces (p+).$ With continuous inclusions we have
\begin{equation} \label{3.3}
\ell_{p+}  \subseteq \ces (q+), \quad 1 \leq p \leq q < \infty ,
\end{equation}
and these containments are proper. Also, with continuous inclusions we have
\begin{equation}\label{3.4}
\ces (p+) \subseteq \ces (q+), \quad 1 \leq p \leq q < \infty ,
\end{equation}
and these containments are proper whenever $ p < q .$

The space $ \ell_{p+}$ is not isomorphic to $ \ces (q+) $ for any $   1 \leq    q < \infty .$

\item[{\rm (ii)}] Each space $ \ces (p+), $ for $  1 \leq p < \infty ,$ is a Fr\'{e}chet-Schwartz space which is isomorphic to the K\"{o}the echelon space of order one
\begin{equation} \label{3.5} \textstyle
\Lambda_{-1/p'} (\alpha) := \{ x \in \C^{\N} : \sum^\infty _{ n = 1} | x_n | n^t < \infty , \quad \forall \: t < ( - \frac{1}{p'}) \},
\end{equation}
 which is  a power series space
 of finite type $ - 1/p' $ and order 1. In particular, every space $ \ces (p+) $ is isomorphic to the fixed power series space
 \begin{equation} \label{3.6} \textstyle
  \Lambda ^1_0 ( \alpha ) := \{ x \in \C^{\N} : \sum^\infty _{ n = 1} | x _n | n ^{-1 / k} < \infty , \quad \forall  k \in \N \}
  \end{equation}
  and hence, $ \ces (p+) $ is not nuclear.
  \end{itemize}
  \end{Prop}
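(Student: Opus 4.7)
The plan splits naturally into the structural identification (ii), which supplies the key tools, and the assertions of (i) on the basis, inclusions, and non-isomorphism. That $\{e_n\}$ is an unconditional basis of $\ces(p+)$ is inherited from the corresponding property in each $\ces(p_k)$ (Proposition \ref{P2.1}(i)) via the projective topology $\ces(p+) = \varprojlim_k \ces(p_k)$. For the continuous inclusions, I compose $\ell_{p+} \hookrightarrow \ell_s \hookrightarrow \ces(s)$ and $\ces(p+) \hookrightarrow \ces(s)$ for each $s > q \geq p$ (using Proposition \ref{P2.1}(ii)(a),(b),(d)) and take the intersection over $s > q$ to land in $\ces(q+)$.

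For (ii), the Fr\'echet--Schwartz property is immediate from Proposition \ref{P2.1}(ii)(b): the linking inclusions $\ces(p_{k+1}) \hookrightarrow \ces(p_k)$ are compact because $p_{k+1} < p_k$. The central task is identifying $\ces(p+)$ with $\Lambda_{-1/p'}(\alpha)$ via the identity map $\Phi:(x_n)\mapsto(x_n)$. An elementary computation gives $\|e_n\|_{\ces(q)}^q = \sum_{k \geq n} k^{-q} \sim (q-1)^{-1} n^{1-q}$, so $\|e_n\|_{\ces(q)} \asymp n^{-1/q'}$; by the triangle inequality, $\|x\|_{\ces(q)} \leq C_q \sum_n |x_n|\, n^{-1/q'}$ for every $q > p$, establishing continuity of $\Phi$ viewed from $\Lambda_{-1/p'}(\alpha)$ into $\ces(p+)$. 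The converse direction is the main obstacle, and I would handle it through the duality $(\ces(q))' \simeq d(q')$ recalled in Section \ref{S1}. Since $(n^t)_n$ is decreasing for $t<0$, one has $\widehat{(n^t)}_n = n^t$, so $(n^t)_n \in d(q')$ iff $t < -1/q'$. Given any $t < -1/p'$, we have $1/|t| < p'$, so there is $q > p$ with $q' \in (1/|t|, p')$; then $(n^t) \in d(q')$ and, using solidity of $\ces(q)$,
\[
 \sum_n |x_n|\, n^t \;\leq\; \big\|(n^t)\big\|_{d(q')}\, \|x\|_{\ces(q)},
\]
which furnishes the continuity of $\Phi^{-1}$ and hence the asserted topological isomorphism.

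The identification $\Lambda_{-1/p'}(\alpha) \simeq \Lambda_0^1(\alpha)$ is realised by the diagonal map $(x_n)\mapsto (n^{1/p'} x_n)$, a straightforward Fr\'echet isomorphism of K\"othe echelon spaces. Non-nuclearity follows from the Grothendieck--Pietsch criterion applied to the matrix $a_{k,n} = n^{-1/k}$: nuclearity would demand some $j>k$ with $\sum_n a_{k,n}/a_{j,n} = \sum_n n^{1/k-1/j} < \infty$, i.e., $1/k - 1/j < -1$, which is plainly impossible. Returning to (i), for properness when $p<q$ I would exhibit the explicit witness $x_n = n^{-1/q-\varep}$ with $0 < \varep < 1/p - 1/q$: the asymptotic $C(|x|)_n \asymp n^{-1/q-\varep}$ shows that $x \in \ces(s) \cap \ell_s$ iff $s > q/(1+q\varep)$, placing $x$ in $\ces(q+)$ but outside both $\ces(p+)$ and $\ell_{p+}$. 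For the subtler case $p=q$ and for the non-isomorphism of $\ell_{p+}$ with any $\ces(q+)$, I would invoke that $\ces(q+)$ is Fr\'echet--Schwartz, hence Montel, whereas $\ell_{p+}$ is not Montel (as remarked after Proposition \ref{P3.1}); this invariant rules out any isomorphism, and an equality of sets $\ell_{p+} = \ces(p+)$ would, by the open mapping theorem for Fr\'echet spaces, contradict precisely this Montel/non-Montel dichotomy.
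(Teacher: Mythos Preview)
Your proposal is correct and self-contained, whereas the paper treats Proposition \ref{P3.2} purely as a summary of results already established in \cite{ABR1} and \cite{ABR5}: every claim is disposed of by a citation (unconditional basis, inclusions, properness, non-isomorphism, the isomorphism with $\Lambda_{-1/p'}(\alpha)$, the common model $\Lambda_0^1(\alpha)$, and non-nuclearity are all referred to specific statements in those papers), together with the open-mapping argument for the properness of \eqref{3.3}. So there is no in-paper proof to compare against at the level of ideas; what you have written is essentially a reconstruction of the arguments that live in those references.

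Your route to the key isomorphism $\ces(p+)\simeq\Lambda_{-1/p'}(\alpha)$ via the two-sided norm estimate $\|e_n\|_{\ces(q)}\asymp n^{-1/q'}$ (which is exactly \eqref{4.8} in the paper) together with the duality $(\ces(q))'\simeq d(q')$ is the natural one and matches the spirit of \cite{ABR1}. The Montel/non-Montel dichotomy you use for both the non-isomorphism $\ell_{p+}\not\simeq\ces(q+)$ and the properness of \eqref{3.3} when $p=q$ is precisely the mechanism behind the cited results. Two cosmetic points: in your Grothendieck--Pietsch computation the ratio is $a_{k,n}/a_{j,n}=n^{1/j-1/k}$ rather than $n^{1/k-1/j}$, but since $|1/j-1/k|<1$ the divergence (and hence non-nuclearity) follows either way; and your diagonal map $(x_n)\mapsto(n^{1/p'}x_n)$ realises the isomorphism in the direction $\Lambda_0^1(\alpha)\to\Lambda_{-1/p'}(\alpha)$, with inverse $(x_n)\mapsto(n^{-1/p'}x_n)$.
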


  Concerning Proposition \ref{P3.2} consider first part (i). That $ \{ e_n : n \in \N \} $ is an unconditional basis is established in
  \cite[Proposition 3.5(i)]{ABR1}. For the fact that $ \ell_{p+} $ is not isomorphic to any space $ \ces (q+), $ for $ 1 \leq q < \infty , $
  see \cite[Proposition 3.5(iii)]{ABR1}. The continuity of the inclusion $ \ell_{p+} \subset \ces (q+) $ in \eqref{3.3} is established
  in \cite[Proposition 26(ii)]{ABR5}. If $ \ell_{p+} = \ces (q+) $ for some $ 1 \leq p \leq q < \infty ,$ then the continuity of the inclusion
  \eqref{3.3} and the open mapping theorem would imply that $ \ell_{p+} $ and $ \ces (q+) $ are isomorphic, which is not so.
  For the continuity of the inclusion in \eqref{3.4},  see \cite[Proposition 26(iii)]{ABR5}. According to \cite[Remark 3.4]{ABR1} the inclusion
  in \eqref{3.4} is necessarily proper if $ p < q .$ Now consider part (ii). That $ \ces(p+)$ is isomorphic to the space
  $ \Lambda _{-1/p'} ( \alpha), $ as given in \eqref{3.5}, is the statement of Theorem 3.1 in \cite{ABR1} and that each one  of these spaces is
  isomorphic to $ \Lambda ^1 _0 ( \alpha) $ is precisely Corollary 3.2 of \cite{ABR1}. Consequently, $  \ces (p+)$ is necessarily a
  \textit{Fr\'{e}chet-Schwartz}\/ space but, it is  \textit{not}\/ nuclear; see \cite[Proposition 3.5(ii)]{ABR1}.

  The following result summarizes certain properties of the Ces\`{a}ro operator that will be needed in the sequel.

  \begin{Prop} \label{P3.3} \begin{itemize} \item[{\rm (i)}]
  \begin{itemize}
  \item[{\rm (a)}] For $ 1 \leq p \leq q <  \infty ,$ the Ces\`{a}ro operator $ C : \ell_{p+} \lra \ell_{q+} $ is continuous.

  \item[{\rm (b)}] Let $ 1 \leq p < \infty $ and $ x \in \C^{\N} .$ Then
  $$
  C^2 ( | x| ) \in \ell_{ p+} \ \mbox{  if \  and \  only \  if } \   C ( | x | ) \in \ell_{p+}.
  $$

  \item[{\rm (c)}] For each $ 1 \leq p < \infty $ we have that  $ [C, \ell_{p+}]_s = \ces ( p+). $
  \end{itemize}

  \item[{\rm (ii)}] \begin{itemize} \item[{\rm (a)}] For $ 1 \leq p \leq  q < \infty $ the Ces\`{a}ro operator $ C : \ces (p+) \lra \ces (q+) $
  is continuous.

  \item[{\rm (b)}] Let $ 1 \leq p < \infty $ and $ x \in \C^{\N} .$ Then
  $$
  C^2 ( | x | ) \in \ces (p+)  \ \mbox{ if \ and \ only if } \ C (| x | ) \in \ces (p+).
  $$
  \item[{\rm (c)}] For each $ 1 \leq p < \infty $ we have that $ [C, \ces (p+)]_s = \ces (p+). $
  \end{itemize}
  \end{itemize}
  \end{Prop}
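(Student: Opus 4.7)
My plan is to reduce each of the six sub-statements to the corresponding Banach-space fact by exploiting the projective-limit structure of $\ell_{p+}$ and $\ces(p+)$. Fix throughout a defining sequence $p_k \downarrow p$ as in \eqref{3.1}, so that $\ell_{p+} = \bigcap_k \ell_{p_k}$ and $\ces(p+) = \bigcap_k \ces(p_k)$ carry the Fr\'echet topologies generated by $\{\|\cdot\|_{p_k}\}_k$ and $\{\|\cdot\|_{\ces(p_k)}\}_k$, respectively.

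For (i)(a), since $C : \ell_r \to \ell_r$ is continuous for each $r > 1$ and the inclusion $\ell_{p+} \hookrightarrow \ell_r$ is continuous for each $r > p$, the composition $C : \ell_{p+} \to \ell_r$ is continuous. Taking any $q_k \downarrow q$ and noting that $q_k > q \geq p$, we obtain continuity of $C : \ell_{p+} \to \ell_{q_k}$ for every $k$, which by the universal property of the projective topology on $\ell_{q+}$ gives continuity of $C : \ell_{p+} \to \ell_{q+}$. The argument for (ii)(a) is identical, with $\ell$ replaced by $\ces$, using that $C : \ces(r) \to \ces(r)$ is continuous and that $\ces(p+) \hookrightarrow \ces(r)$ continuously for $r > p$. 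The criteria (i)(b) and (ii)(b) are then direct transfers of the Banach-space criteria \eqref{2.3} and \eqref{2.2}: for any $x \in \C^{\N}$, membership of $C^2(|x|)$ in $\ell_{p+}$ (resp.\ $\ces(p+)$) is, by definition, membership in every $\ell_{p_k}$ (resp.\ $\ces(p_k)$), and \eqref{2.3} (resp.\ \eqref{2.2}) converts each such condition into the corresponding condition on $C(|x|)$, which reassembles to $C(|x|) \in \ell_{p+}$ (resp.\ $C(|x|) \in \ces(p+)$).

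For the solid-preimage identifications (i)(c) and (ii)(c), I would argue in two directions. Forward: $\ces(p+)$ is solid, being the intersection of the solid spaces $\ces(p_k)$, and the inclusion $C(\ces(p+)) \subseteq \ell_{p+}$ follows from the pointwise bound $|Cx| \leq C(|x|)$ together with $\|C(|x|)\|_{p_k} = \|x\|_{\ces(p_k)} < \infty$ for each $k$ (by \eqref{1.4}) and the solidity of each $\ell_{p_k}$; likewise $C(\ces(p+)) \subseteq \ces(p+)$ is just (ii)(a) with $q = p$. Reverse: let $Y \subseteq \C^{\N}$ be any solid vector space with $C(Y) \subseteq \ell_{p+}$ (resp.\ $C(Y) \subseteq \ces(p+)$). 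Since $\ell_{p+} \subseteq \ell_{p_k}$ and $\ces(p+) \subseteq \ces(p_k)$ for every $k$, the Banach identities $[C, \ell_{p_k}]_s = [C, \ces(p_k)]_s = \ces(p_k)$ from \eqref{2.5} force $Y \subseteq \ces(p_k)$ for each $k$, hence $Y \subseteq \bigcap_k \ces(p_k) = \ces(p+)$. The only obstacle is really bookkeeping: one must keep straight that the Fr\'echet topology reduces continuity to statements at each individual seminorm level, and that the largest-solid-subspace characterization transfers through the intersection precisely because \eqref{2.5} holds at every $p_k$. No ingredient beyond \eqref{2.2}, \eqref{2.3}, \eqref{2.5}, and the basic projective-limit description of $\ell_{p+}$ and $\ces(p+)$ is needed.
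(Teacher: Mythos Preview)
Your proposal is correct and follows essentially the same strategy as the paper: both reduce each statement to the Banach-space level via the projective-limit description $\ell_{p+}=\bigcap_k \ell_{p_k}$, $\ces(p+)=\bigcap_k \ces(p_k)$ and invoke \eqref{2.2}, \eqref{2.3}, \eqref{2.5} at each step. The paper simply cites earlier references for (i)(a), (i)(c), (ii)(a), (ii)(c) rather than writing out the projective-limit bookkeeping you supply, and for (i)(b), (ii)(b) it gives the same argument you give (using part (a) for the easy implication).
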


  \begin{proof} (i) \ (a) This is Proposition 28(i) of \cite{ABR5}.

  (b) \ Fix $ 1 \leq p < \infty $  and let $ x \in \C^{\N}.$ If $ C ( | x | ) \in \ell_{p+},$ then part (a) ensures that also
  $ C^2 ( | x | ) \in \ell_{p+}.$

  Conversely, suppose that $ C^2 ( |  x| ) \in \ell_{p+}.$ Fix any $ q > p .$ Since $ C^2 ( | x |)  \in \ell_{ p+} \subseteq \ell_q ,$ it follows
  from \eqref{2.3} that $ C ( | x | ) \in \ell_q .$ But, $ q > p $ is arbitrary, and so $ C ( | x | ) \in \ell_{p+} . $

  (c) \  See \cite[Proposition 2.5]{ABR1}.

  (ii) \ (a) \ See Proposition 28(iii) of \cite{ABR5}.

  (b) \ The proof of (b) in part (i) can easily be adapted to apply to this case (by using part (a) of (ii) and \eqref{2.2} in place of
   \eqref{2.3}).

   (c) \ See \cite[Proposition 2.6]{ABR1}.
   \end{proof}

   We now turn our attention to the (LB)-spaces $ \ell_{p_{\mbox{-}}}$ and $ \ces (p\mbox{-}).$  Given $ 1 < p \leq \infty ,$ consider any strictly increasing sequence
   \begin{equation} \label{3.7}  \textstyle
   \{ p_k \} ^\infty _{ k = 1} \subseteq ( 1, p) \quad \mbox{ with } \quad p_k \uparrow p .
   \end{equation}
   Define the linear spaces
   \begin{equation} \label{3.8}  \textstyle
   \ell_{p_{\mbox{-}}} := \bigcup_{ 1 < q < p} \ell_q \quad \mbox{ and } \quad \ces (p\mbox{-}) := \bigcup_{ 1 < q < p } \ces (q),
   \end{equation}
   and equip them with the inductive limit topology. In both cases the union is strictly increasing; see parts (a), (b) of Proposition \ref{P2.1}(ii).
   Accordingly, $ \ell_{p_{\mbox{-}}} = \mbox{ind}_k \ell_{p_k} $ and $ \ces (p\mbox{-}) = \mbox{ind}_k \, \ces (p_k) $ are (LB)-spaces, that is, a countable inductive
   limit of Banach spaces,   \cite{Bi}, \cite{BMS},\cite[pp.290--291]{MV} . Since the Banach spaces  $ \ell_q, \ces (q), $ for $ 1 < q < \infty ,$ are solid,
   it is clear from \eqref{3.8}  that both $ \ell_{p_{\mbox{-}}} $ and $ \ces (p\mbox{-})$ are \textit{solid}\/ in  $ \C^{\N}. $

For the definition of the \textit{strong dual}\/ space $ X'_\beta $ of a locally convex Hausdorff space $ X $ we refer to \cite[p.269]{MV}, for example.
   In the event that $ X $ is a Fr\'{e}chet space, the space $ X'_\beta $ is a  complete (DF)-space, \cite[Proposition 25.7]{MV}. Moreover, a Fr\'{e}chet space $ X$
   is reflexive if and only if $ X'_\beta $ is reflexive,  \cite[Corollary 25.11]{MV}. A reflexive Fr\'{e}chet space $X$ is  Montel if and only if
   $ X'_\beta $ is Montel, \cite[Proposition 24.25]{MV}. Recall that a locally convex inductive limit is said to be {\em regular} if each bounded set is contained and bounded in some step.

\begin{Prop} \label{P3.4} \begin{itemize} \item[{\rm (i)}]  For each
   $ 1 <  p \leq \infty,$ the space $ \ell_{p_{\mbox{-}}} $ is a  regular (LB)-space which is reflexive but not Montel.  It is a  (DF)-space which satisfies
   $ \ell_{p_{\mbox{-}}} \simeq ( \ell_{p'+})'_\beta $  and   $ ( \ell_{p_{\mbox{-}}})'_\beta \simeq \ell_{p'+} . $

   \item[{\rm (ii)}] For $ 1 < p \leq q \leq \infty $ the natural inclusions
   \begin{equation} \label{3.9} \textstyle
   \ell_{p-}  \subseteq \ell_{q-}
   \end{equation}

   are continuous. If $ p <  q, $ then $ \ell_{p_{\mbox{-}}} \subsetneqq \ell_{q_{\mbox{-}}}. $ Also,  the inclusions
   \begin{equation}  \label{3.10}
   \ell_{p_{\mbox{-}}}  \subseteq \ces (q\mbox{-}), \quad 1 < p \leq q \leq \infty ,
   \end{equation}
   are continuous and proper.

   \item[{\rm (iii)}] For $ 1 < p \leq q \leq \infty ,$ both of the Ces\`{a}ro operators $ C : \ell_{p_{\mbox{-}}} \lra \ell_{q_{\mbox{-}}} $ and
   $ C : \ces (p\mbox{-}) \lra \ell_{q_{\mbox{-}}} $ are continuous.

   \item[{\rm (iv)}] For $ 1 < p \leq \infty $ and $ x \in \C^{\N} $ it is the case that
   \begin{equation}  \label{3.11} \textstyle
   C^2 ( | x | ) \in \ell_{p_{\mbox{-}}} \ \mbox{ if \ and \ only \ if } \ C (| x | ) \in \ell_{ p_{\mbox{-}}} .
   \end{equation}

   \item[{\rm (v)}] The identity  $ [C, \ell_{p_{\mbox{-}}}]_s = \ces (p\mbox{-}) $ is valid for every $ 1 < p \leq \infty .$
   \end{itemize}
   \end{Prop}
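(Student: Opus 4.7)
The plan is to treat the five parts in order, leveraging the Banach-level facts assembled in Section~\ref{S2} together with standard duality and universal-property arguments for inductive limits.

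For part (i), the structural identification $\ell_{p-} \simeq (\ell_{p'+})'_\beta$ comes from general duality. Writing $\ell_{p'+} = \bigcap_k \ell_{p'_k}$ with $p'_k \downarrow p'$, and noting that every inclusion $\ell_{p'_{k+1}} \hookrightarrow \ell_{p'_k}$ has dense range (the finitely supported sequences are dense in each $\ell_r$), the standard duality for a countable projective limit of reflexive Banach spaces gives $(\ell_{p'+})'_\beta = \mathrm{ind}_k (\ell_{p'_k})' = \mathrm{ind}_k \ell_{p_k} = \ell_{p-}$. Reflexivity of $\ell_{p-}$ and the second identification $(\ell_{p-})'_\beta \simeq \ell_{p'+}$ then follow from reflexivity of $\ell_{p'+}$ (Proposition~\ref{P3.1}) together with \cite[Corollary~25.11]{MV}. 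The (DF) property is automatic for the strong dual of a Fr\'echet space \cite[Proposition~25.7]{MV}, and failure of being Montel is forced by the same property of $\ell_{p'+}$ via \cite[Proposition~24.25]{MV}. Regularity is obtained by observing that every bounded set in $\ell_{p-} = (\ell_{p'+})'_\beta$ is equicontinuous (by reflexivity), hence lies in the polar of some zero-neighbourhood of $\ell_{p'+}$, and this polar is contained and bounded in a single step $\ell_{p_k}$.

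Parts (ii) and (iii) reduce to the universal property of the inductive limit: each map restricted to a step $\ell_{p_k}$ factors continuously through a step of the target, using the inclusions of Proposition~\ref{P2.1} and the already-known Banach-level continuity of $C$ on each $\ell_r$ and from $\ces(r)$ into $\ell_r$. Strict containment in (ii) is witnessed by concrete test sequences: the power $x_n = n^{-1/r}$ with $p<r<q$ lies in $\ell_s$ exactly for $s>r$, hence belongs to $\ell_{q-}\setminus\ell_{p-}$; and the sparse indicator $y_n = \mathbf{1}_{\{n=2^k\}}$ satisfies $(C(|y|))_n \asymp (\log n)/n$, so $y$ lies in $\ces(s)$ for every $s>1$ (hence in $\ces(q-)$) but in no $\ell_s$, placing it in $\ces(q-)\setminus\ell_{p-}$.

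Part (iv) is a soft consequence of (iii) together with the Banach-level equivalence \eqref{2.3}: continuity of $C:\ell_{p-}\to\ell_{p-}$ supplies the forward direction, while if $C^2(|x|)\in\ell_r$ for some $r<p$ then \eqref{2.3} forces $C(|x|)\in\ell_r\subseteq\ell_{p-}$. Part (v) follows the pattern of Proposition~\ref{P3.3}(c): the inclusion $\ces(p-)\subseteq[C,\ell_{p-}]_s$ uses solidity of $\ces(p-)$ together with (iii); for the reverse, given a solid $X$ with $C(X)\subseteq\ell_{p-}$ and $x\in X$, solidity yields $|x|\in X$, so there exists $r<p$ with $C(|x|)\in\ell_r$, which by \eqref{1.3} means $x\in\ces(r)\subseteq\ces(p-)$. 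I expect the only genuinely delicate point of the whole proposition to be pinning down regularity in part (i), which requires either the equicontinuity argument sketched above or an appeal to general (DF)-space theory; the rest is a disciplined application of Proposition~\ref{P2.1} together with the inductive-limit formalism.
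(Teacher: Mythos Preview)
Your proposal is correct. Parts (iii), (iv), and (v) match the paper's arguments essentially step for step (in (v) you invoke the definition \eqref{1.3} directly, whereas the paper routes through the inclusion $\ell_q\subseteq\ces(q)$ and \eqref{2.1}; your version is slightly more economical).

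The genuine differences occur in (i) and (ii). For (i) the paper simply says the claims are ``essentially known'' and cites \cite{Bi}, \cite{Di}, \cite{MM}; you instead spell out the projective--inductive duality and the equicontinuity argument for regularity. For (ii) the contrast is more substantial. To show $\ell_{p-}\subsetneqq\ell_{q-}$ the paper argues abstractly: pick $r\in(p,q)$, use $\ell_{p-}\subseteq\ell_r$, and invoke Proposition~\ref{P2.1}(ii)(a) to locate an element of $\ell_{q-}\setminus\ell_r$. To show $\ell_{p-}\subsetneqq\ces(q-)$ the paper argues by contradiction via the open mapping theorem (both spaces are webbed and ultrabornological), concluding the spaces would be isomorphic, and then appeals to the fact that $\ces(q-)$ is Montel while $\ell_{p-}$ is not. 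You instead exhibit explicit witnesses ($n^{-1/r}$ and the dyadic indicator). Your approach is more concrete and self-contained; the paper's approach for \eqref{3.10} has the advantage that it simultaneously shows the two spaces are non-isomorphic, not merely unequal as sets.
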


   \begin{proof} \ (i) \ These facts are essentially known and follow from the general facts stated prior to the proposition;
   see also \cite{Bi}, \cite{Di}, \cite{MM}, for example.

   (ii) \ The continuity of the inclusion \eqref{3.9} occurs in \cite[Proposition 25(i)]{ABR2}. To see that the containment is proper
   when $ p< q, $ choose $ r \in (p, q) $ and note that $ \ell_{p_{\mbox{-}}} \subseteq \ell_r.$ According to part (a) of Proposition \ref{P2.1}(ii)
   there exists $ x \in \ell_q \bsl \ell_r .$ Then $ x \in \ell_{q_{\mbox{-}}} \bsl \ell_{p_{\mbox{-}}} $ because $ \ell_r \subseteq \ell_{q_{\mbox{-}}} . $

   For the continuity of the inclusion \eqref{3.10} see \cite[Proposition 25(ii)]{ABR2}. Suppose their exist $ p,q$ with
   $ 1 < p \leq q    \leq \infty $ such that $ \ell_{p_{\mbox{-}}} = \ces ( q\mbox{-}) .$  Since both $ \ell_{p_{\mbox{-}}}, \ces (q\mbox{-}) $ have a web and are
   ultra-bornological, \cite[Remark 24.36]{MV}, and $ \ell_{p_{\mbox{-}}} $ is continuously included in $ \ces (q\mbox{-}), $ it follows from the
   open mapping theorem, \cite[Theorem 24.30]{MV}, that $ \ell_{p_{\mbox{-}}} $ and $ \ces (q\mbox{-})$ are isomorphic. But, this is impossible as
   $ \ell_{p_{\mbox{-}}}$ is not Montel whereas $ \ces (p\mbox{-})$ is Montel, \cite[p.48]{ABR2}. Hence,  the inclusion \eqref{3.10} is always proper.

   (iii) \ See parts (i) and (iv) of Proposition 27 in \cite{ABR2}.

   (iv) \ Fix $ p \in (1, \infty ] $ and let $ x \in \C ^{\N} .$ If $ C ( | x | ) \in \ell_{p_{\mbox{-}}}, $ then also $
   C^2 ( | x | ) \in \ell_{p_{\mbox{-}}} $ by part (iii).  Conversely, suppose that  $ C^2 ( | x | ) \in \ell_{p_{\mbox{-}}} .$ According
   to \eqref{3.8} there exists $ q \in(1, p) $ such that $ C^2 ( | x | ) \in \ell_q $ and hence, by \eqref{2.3},
   $ C ( | x | ) \in \ell_q \subseteq \ell_{ p_{\mbox{-}}}.$

   (v) \ By part (iii) the operator $ C $ maps $ \ces (p\mbox{-}) $ into $ \ell_{p_{\mbox{-}}} ,$ which implies that $ \ces ( p\mbox{-}) \subseteq  [ C, \ell_{p_{\mbox{-}}}]_s.$
   Conversely, let $ X \subseteq  \C^{\N} $  be a solid subspace such that $ C (X) \subseteq \ell_{p_{\mbox{-}}} .$ Given $ x \in X ,$ also
   $ | x | \in X $ and so $ C ( | x | ) \in \ell_{p_{\mbox{-}}} .$
   Choose $ q \in (1,p) $ such that $ C ( | x | ) \in \ell_q   \subseteq \ces ( q) ;$ see \eqref{3.8}. Then \eqref{2.1} implies that $ x \in \ces (q) \subseteq \ces (p\mbox{-}). $
   Accordingly, $ X \subseteq \ces (p\mbox{-})$ which implies that $ [C, \ell_{p_{\mbox{-}}}]_s \subseteq \ces (p\mbox{-}). $
   \end{proof}

The (LB)-spaces  $ \ces (p\mbox{-})$ are rather different to the (LB)-spaces $ \ell_{p_{\mbox{-}}}.$ This is due to the fact, for $ 1 < p < q < \infty,$ that the natural inclusion
 map $ \ces(p) \subseteq \ces (q)$ is \textit{compact}\/ whereas the inclusion map $ \ell_p \subseteq \ell_q$ is \textit{not}\/ compact; see
 parts (a), (b) of Proposition  \ref{P2.1}(ii).
 For the definition of a (DFS)-space we refer to \cite[p.304 \& Proposition 25.20]{MV}; it is the strong dual of a Fr\'{e}chet-Schwartz space. In particular, a  (DFS)-space is complete
 and Montel, \cite[pp.61-62]{Bi}.

 \begin{Prop} \label{P3.5} \begin{itemize} \item[{\rm (i)}] With continuous inclusions  we have
 \begin{equation} \label{3.12}
  \ces (p\mbox{-}) \subseteq \ces (q\mbox{-}) \subseteq \C^{\N} , \quad 1 < p \leq q \leq \infty
  \end{equation}
  and these containments are proper whenever $\:  p < q  \:.$

  The space $ \ell_{p_{\mbox{-}}} $ is not isomorphic to $ \ces (q\mbox{-})$ for every pair $ 1 < p,  q \leq \infty .$

  \item[{\rm (ii)}] Each space $  \ces (p\mbox{-}),$ for $ 1 < p \leq \infty ,$ coincides algebraically and topologically with a countable inductive  limit $ k_1 ( \nu_p) $ of  weighted
  $ \ell_1$-spaces.This co-echelon space is isomorphic to the strong dual of the (power series) Fr\'{e}chet-Schwartz space $ \Lambda^\infty _{ 1 /p'} ( \alpha)  $
  of finite type $ 1 / p'$ and infinite order. In particular, $ \ces (p\mbox{-}) = ( \Lambda ^\infty _{ 1/p'} ( \alpha ) )'_\beta $ is a (DFS)-space but, it is not nuclear.
  Moreover, $ (\ces ( p\mbox{-}))'_\beta \simeq \Lambda ^\infty _{ 1/p'} ( \alpha).$ In addition, each space $ \ces (p\mbox{-}), $ for $ 1 < p \leq \infty, $ is isomorphic to the fixed
  (DFS)-space
  \begin{equation} \label{3.13}  \textstyle
   ( \La ^\infty _1 ( \alpha ))'_\beta = \ces ( \infty \mbox{-} ).
   \end{equation}

   \item[{\rm (iii)}] Whenever $ 1 < p \leq q \leq \infty ,$ the Ces\`{a}ro operator $ C : \ces (p\mbox{-}) \lra \ces (q\mbox{-})$ is continuous.

\item[{\rm (iv)}] For $ 1  < p \leq \infty $ and $ x \in \C^{\N} $ it is the case that
$$
C^2 ( | x | ) \in \ces (p\mbox{-}) \ \mbox{ if \ and \ only \ if }  \ C ( | x | ) \in \ces (p\mbox{-}).
$$
\item[{\rm (v)}] The identity $ [C, \ces (p\mbox{-})]_s  = \ces (p\mbox{-}) $ is valid for every $ 1 < p \leq \infty . $
\end{itemize}
\end{Prop}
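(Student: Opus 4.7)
The plan is to deduce part (i) from the inductive limit definition together with Propositions \ref{P2.1}(ii)(b) and \ref{P3.4}(i), establish part (ii) by dualising Proposition \ref{P3.2}(ii), and then obtain parts (iii)--(v) as adaptations of Propositions \ref{P3.3}(ii) and \ref{P3.4}(iii)--(v).

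For part (i), the continuity of the inclusions in \eqref{3.12} follows from the universal property of inductive limits: given $1 < s < p \leq q$, Proposition \ref{P2.1}(ii)(b) furnishes a continuous inclusion $\ces(s) \hookrightarrow \ces(q\mbox{-})$, and passing to the inductive limit as $s \uparrow p$ transfers continuity to $\ces(p\mbox{-}) \hookrightarrow \ces(q\mbox{-})$; continuity of $\ces(q\mbox{-}) \hookrightarrow \C^{\N}$ is analogous via Lemma \ref{L2.3}(i). For properness when $p < q$, I would fix $r \in (p,q)$ and, using the strict inclusions $\ces(p_k) \subsetneq \ces(r)$ along a sequence $p_k \uparrow p$ (Proposition \ref{P2.1}(ii)(b)), construct by a diagonal argument some $x \in \ces(r) \subseteq \ces(q\mbox{-})$ lying outside $\ces(s)$ for every $s < p$, whence $x \notin \ces(p\mbox{-})$. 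The non-isomorphism $\ell_{p_{\mbox{-}}} \not\simeq \ces(q\mbox{-})$ will then follow from the fact that $\ell_{p_{\mbox{-}}}$ is not Montel by Proposition \ref{P3.4}(i), whereas part (ii) shows $\ces(q\mbox{-})$ is (DFS), hence Montel; the Montel property is an isomorphism invariant.

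The main obstacle is part (ii). The (DFS) structure comes cheaply: by Proposition \ref{P2.1}(ii)(b) the linking maps $\ces(p_k) \hookrightarrow \ces(p_{k+1})$ along any $p_k \uparrow p$ are compact, so $\ces(p\mbox{-}) = \mbox{ind}_k \ces(p_k)$ is automatically a (DFS)-space and therefore regular, complete, and Montel. The delicate part is the identification with the co-echelon space $k_1(\nu_p)$ and with $(\Lambda^\infty_{1/p'}(\alpha))'_\beta$. My approach is to dualise Proposition \ref{P3.2}(ii): since $\ces(p'+)$ is isomorphic to a Fr\'{e}chet--Schwartz power series space of order $1$ and finite type, and since each Banach step $\ces(p_k)$ is reflexive with $(\ces(p_k))' \simeq d(p_k')$, the projective limit $\mbox{proj}_k \, d(p_k')$ is naturally identified with $(\ces(p'+))'_\beta$, and reflexivity then recovers $\ces(p\mbox{-})$ as the strong dual of $\ces(p'+)$; standard power series duality converts this into the claimed isomorphism with $(\Lambda^\infty_{1/p'}(\alpha))'_\beta$. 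The fixed-space identity \eqref{3.13} then reduces to the classical fact that any two power series spaces of order $\infty$ and finite type with exponent sequence equivalent to $\alpha = (\log n)_n$ are mutually isomorphic, and non-nuclearity transfers from the non-nuclearity of $\ces(p+)$ in Proposition \ref{P3.2}(ii) because a Fr\'{e}chet--Schwartz space is nuclear if and only if its strong dual is. The obstacle I anticipate is carrying out the dualisation without implicitly invoking properties of $d(p'+)$ that are only proved in Section \ref{S4}, which may force a direct K\"othe-matrix computation of the weights $\nu_p$ from the Bennett--Jagers description of $\ces(q)$.

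Parts (iii)--(v) should follow the template of Proposition \ref{P3.4}. For (iii), continuity of $C$ on each step $\ces(q) \to \ces(q)$ (recorded in Section \ref{S2}) composes with the continuous inclusions $\ces(p_k) \subseteq \ces(q_k)$ from Proposition \ref{P2.1}(ii)(b) (for suitable $p_k \uparrow p$ and $q_k \uparrow q$), and the universal property of inductive limits transfers this to $C : \ces(p\mbox{-}) \to \ces(q\mbox{-})$. For (iv), the forward direction is immediate from (iii); conversely, if $C^2(|x|) \in \ces(p\mbox{-})$ I would select $q \in (1,p)$ with $C^2(|x|) \in \ces(q)$ and apply \eqref{2.2} to conclude $C(|x|) \in \ces(q) \subseteq \ces(p\mbox{-})$. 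For (v), the inclusion $\ces(p\mbox{-}) \subseteq [C, \ces(p\mbox{-})]_s$ is immediate from (iii) and solidity; conversely, given a solid $X \subseteq \C^{\N}$ with $C(X) \subseteq \ces(p\mbox{-})$ and any $x \in X$, solidity forces $|x| \in X$, so $C(|x|) \in \ces(q)$ for some $q \in (1,p)$, and \eqref{2.1} yields $x \in \ces(q) \subseteq \ces(p\mbox{-})$, whence $X \subseteq \ces(p\mbox{-})$.
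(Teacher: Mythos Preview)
Your approach to part (ii) has a genuine error. You write that ``reflexivity then recovers $\ces(p\mbox{-})$ as the strong dual of $\ces(p'+)$,'' but this is false: the strong dual of $\ces(p'+)$ is $d(p\mbox{-})$, not $\ces(p\mbox{-})$. Indeed, $\ces(p'+) = \mathrm{proj}_k\,\ces(p_k')$ with $p_k' \downarrow p'$, so its strong dual is the \emph{inductive} limit $\mathrm{ind}_k\,(\ces(p_k'))' \simeq \mathrm{ind}_k\, d(p_k) = d(p\mbox{-})$; this is exactly Proposition~\ref{P4.3}(i), and by Theorem~\ref{T4.7}(ii) the spaces $\ces(p\mbox{-})$ and $d(p\mbox{-})$ are never isomorphic. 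Your line ``the projective limit $\mathrm{proj}_k\, d(p_k')$ is naturally identified with $(\ces(p'+))'_\beta$'' conflates a projective limit with an inductive one. Even if you instead compute $(\ces(p\mbox{-}))'_\beta \simeq \mathrm{proj}_k\, d(p_k') = d(p'+)$ correctly, identifying $d(p'+)$ with $\Lambda^\infty_{1/p'}(\alpha)$ is precisely Corollary~\ref{C4.5}(ii), which is \emph{derived from} Proposition~\ref{P3.5}(ii); so the circularity you anticipated is not merely a risk but is unavoidable along this route. Note also that dualising $\ces(p'+) \simeq \Lambda^1_{-1/p}(\alpha)$ yields a co-echelon space of order~$\infty$, not the order-$1$ co-echelon space $(\Lambda^\infty_{1/p'}(\alpha))'_\beta$ that you are aiming for.

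The paper takes the direct route you relegate to a fallback: one uses Bennett's equivalent weighted $\ell_1$-norms on each Banach step $\ces(q)$ to identify $\ces(p\mbox{-})$ algebraically and topologically with the co-echelon space $k_1(\nu_p)$ for the explicit weights $\mathrm{v}_n(k) = k^{-1/q_n}$, $q_n \downarrow p'$, and then reads off the description as $(\Lambda^\infty_{1/p'}(\alpha))'_\beta$ from standard co-echelon theory. This is an independent calculation, not a dualisation of Proposition~\ref{P3.2}(ii). Separately, in part~(i) your diagonal construction for properness is unnecessary: choose $p < r < s < q$ and any $x \in \ces(s) \setminus \ces(r)$ via Proposition~\ref{P2.1}(ii)(b); then $x \in \ces(q\mbox{-})$ while $\ces(p\mbox{-}) \subseteq \ces(r)$ forces $x \notin \ces(p\mbox{-})$. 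Your treatment of parts (iii)--(v) is correct and matches the paper.
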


Some comments relevant to Proposition \ref{P3.5} are as follows. For part (i), the continuity of the first inclusion in \eqref{3.12} occurs in \cite[Proposition 25(iii)]{ABR2}.
For the second inclusion in \eqref{3.12}, recall that the lcH-topology of the Fr\'{e}chet space $ \C^{\N} $ is given by the increasing sequence
of seminorms $ \{ q_m : m \in \N \}, $ where
$$  \textstyle
q_m (x) := \max_{ 1 \leq k \leq m}  | x_k | , \quad x =  ( x_n) _n \in \C^{\N} .
$$
 Given a fixed $    m  \in\N $ it follows from Lemma \ref{L2.3}(i), for each $ 1 < r < \infty ,$ that
$$
q_m (x) \leq m \,  \| x \| _{\ces(r)} ,  \quad x \in \ces (r).
$$
Accordingly, the inclusion $ \ces (r) \subseteq \C^{\N} $ is continuous for each
$ r \in (1, \infty )$,  which implies that also the inclusion $ \ces (q\mbox{-}) \subseteq \C^{\N} $ is continuous for each $ 1 < q \leq \infty , $
\cite[Proposition 24.7]{MV}. That the containment \eqref{3.12} is \textit{proper}\/ whenever $ p< q $ can be argued as in the proof
of part (ii) in Proposition \ref{P3.4} by replacing the use of Proposition \ref{P2.1}(ii)(a) there with Proposition \ref{P2.1}(ii)(b).
Corollary 4 of \cite{ABR2} shows that $ \ell_{p_{\mbox{-}}} $ is not isomorphic to $ \ces (q\mbox{-})$ for every $ 1 < p, q \leq \infty .$ All of the
assertions in part (ii) are proved on pp. 49-51 of \cite{ABR2}. For part (iii) we refer to \cite[Proposition 27(iii)]{ABR2}. The statement in part (iv)
follows directly from Proposition 1(i) of \cite{ABR2}. Finally the identity in (v) is Proposition 1(iv) of \cite{ABR2}.

The canonical vectors $ \{ e_n : n \in \N \} $  form an unconditional basis in the Fr\'{e}chet spaces $ \ces (p+), 1 \leq p < \infty $
(cf.\ Proposition \ref{P3.2}(i)), and a Schauder basis in each (DFS)-space $ \ces (p\mbox{-}), 1 < p \leq \infty , $ \cite[Proposition 1]{ABR2}. Actually
more is true. We recall the notion of an absolute basis, \cite[p.314]{Ja}, \cite[p.341]{MV}. Given a Schauder basis
$ \{ u_n : n \in \N \} $ of a locally convex Hausdorff space $ X ,$ for each $ x \in X  $ there exists a unique element $ (x_n) _n \in \C ^{\N}$
such that $ x = \sum^\infty _{ n = 1} x_n u_n , $ with the series converging in $ X $.  If, for each continuous seminorm $ p $ on $ X$
there exist a continuous seminorm $ q $ on $ X $ and $ A  > 0 $ such
that
$$ \textstyle
\sum^\infty _{ n = 1} | x_n| p (u_n ) \leq A  q (x), \quad x \simeq ( x_n ) _n \in X ,
$$
then $ \{ u_n : n \in \N \} $ is called an \textit{absolute basis}\/ of $ X .$

\begin{Prop}  \label{P3.6}
The canonical vectors $ \{ e_n : n \in \N \} $ form an absolute basis in each space $ \ces (p+), 1 \leq p < \infty ,$ and in each space $ \ces (p\mbox{-}), 1 < p \leq \infty . $
\end{Prop}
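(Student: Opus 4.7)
The plan is to reduce both the Fr\'{e}chet and (LB) cases to a single weighted embedding estimate. For every pair $1 < q < r < \infty$ I would establish
$$\sum_{n=1}^\infty |x_n|\, n^{-1/r'} \leq A(q,r)\, \|x\|_{\ces(q)}, \quad x \in \ces(q),$$
for a finite constant $A(q,r)$. The cleanest proof exploits the duality $d(q') \simeq (\ces(q))'$ recalled in Section \ref{S1}: choose $y_n \in \C$ with $|y_n| = n^{-1/r'}$ and $y_n x_n = |x_n|\, n^{-1/r'}$, so that $\sum_n y_n x_n = \sum_n |x_n|\, n^{-1/r'}$. Since the weight $n^{-1/r'}$ is monotonically decreasing, the peak sequence $\hat{y}_n := \sup_{k \geq n} |y_k|$ satisfies $\hat{y}_n = n^{-1/r'}$, giving
$$\|y\|_{d(q')}^{q'} = \sum_{n=1}^\infty n^{-q'/r'},$$
which is finite precisely because $q < r$ forces $q'/r' > 1$. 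The duality pairing then delivers the displayed embedding, which I will call (E). A parallel integral comparison of $\|e_n\|_{\ces(q)}^q = \sum_{j \geq n} j^{-q}$ supplies the elementary estimate $\|e_n\|_{\ces(q)} \leq C_q\, n^{-1/q'}$.

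With (E) in place, the Fr\'{e}chet case is essentially automatic. Fix a fundamental sequence of norms $\|\cdot\|_{\ces(p_k)}$ on $\ces(p+)$ with $p_k \downarrow p$. Given $k$, choose $m > k$ so that $p_m < p_k$; applying (E) with $q = p_m$ and $r = p_k$ yields
$$\sum_{n=1}^\infty |x_n|\, \|e_n\|_{\ces(p_k)} \leq C_{p_k}\, A(p_m, p_k)\, \|x\|_{\ces(p_m)}, \quad x \in \ces(p+),$$
which is exactly the defining inequality for an absolute basis.

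For the (LB)-space $\ces(p\mbox{-})$, given any continuous seminorm $P$ I would introduce the candidate majorant
$$Q(x) := \sum_{n=1}^\infty |x_n|\, P(e_n).$$
Provided $Q$ is a continuous seminorm on $\ces(p\mbox{-})$, the absolute basis identity $\sum_n |x_n|\, P(e_n) = Q(x)$ holds tautologically with constant $1$. To verify continuity it suffices, by the inductive-limit structure, to bound $Q$ step-wise: fix $q < p$ and pick any $r$ with $q < r < p$. Continuity of $P$ on the step $\ces(r)$ gives $P(e_n) \leq C\, \|e_n\|_{\ces(r)} \leq C'\, n^{-1/r'}$, and (E) then supplies $Q(x) \leq C' A(q, r)\, \|x\|_{\ces(q)}$ for $x \in \ces(q)$, completing the argument.

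The main obstacle is the embedding (E) itself. Without the duality identification $d(q') \simeq (\ces(q))'$, the natural alternative is an Abel-summation proof based on $|x_n| = S_n - S_{n-1}$ with $S_n := n \cdot (C|x|)_n$; but then the boundary term $S_N N^{-1/r'} = (C|x|)_N N^{1/r}$ does not vanish a priori and must be handled via the $\liminf$-observation that $C|x| \in \ell_q$ with $q < r$ forces $\liminf_N (C|x|)_N N^{1/r} = 0$. The dualisation route bypasses this entirely, since $d(q')$ is defined precisely through the peak sequence $\hat{y}$ and monotone weights of the form $n^{-1/r'}$ are peak-invariant.
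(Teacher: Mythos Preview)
Your argument is correct and takes a genuinely different route from the paper. The paper proceeds structurally: it invokes Proposition~\ref{P3.2}(ii) to identify $\ces(p+)$ with the K\"{o}the echelon space $\Lambda_{-1/p'}(\alpha)$ of order~1, and Proposition~\ref{P3.5}(ii) to realise $\ces(p\mbox{-})$ as a co-echelon space $k_1(\nu_p) = K_1(\overline{V})$ of order~1; in both cases a general result (\cite[Theorem 14.7.8]{Ja} or \cite[Proposition 27.26]{MV}) then asserts that the canonical vectors in any order-1 K\"{o}the space form an absolute basis. You bypass those identifications entirely, instead extracting the single concrete embedding (E) from the duality $d(q') \simeq (\ces(q))'$ together with the norm estimate $\|e_n\|_{\ces(q)} \asymp n^{-1/q'}$ (which is \eqref{4.8} in the paper), and then verifying the absolute-basis inequality directly on each fundamental norm, respectively on each inductive step. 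Your approach is more self-contained and never touches the power-series classification; the paper's proof is shorter but only because it cashes in the heavier structural results already established in Propositions~\ref{P3.2} and~\ref{P3.5}. In effect your estimate (E) is a hands-on proof of exactly the $\ell_1$-weighted description that the paper imports as a black box.
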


\begin{proof} \   Fix $  1 \leq p < \infty $ and recall (by Proposition \ref{P3.2}(ii)) that $ \ces (p+)$ is isomorphic to the K\"{o}the echelon space
$ \Lambda_{-1/p'} (\alpha). $ The conclusion then follows from \cite[Theorem 14.7.8]{Ja} or \cite[Proposition 27.26]{MV}.

Now fix $ 1 < p \leq \infty .$ As mentioned in part (ii) of Proposition  \ref{P3.5}, $ \ces (p\mbox{-}) $ equals a co-echelon space of order 1
given by $ k_1 ( v  _p ) = \mbox{ind}_n  \ell_1 ( {\rm v }_n)$
for the decreasing sequence of weights $ \nu_p = \{ {\rm v}_n : n \in \N \} ,$ where $ {\rm v }_n ( k) := k^{-1/q_n} $ for $ k \in \N ,$ with $ q_n \downarrow p';$
see \cite[p.49]{ABR2}. According to \cite{BMS} the space $ k_1 ( \nu_p)$ is a K\"{o}the sequence space $ K_1 ( \overline{V} )$ for some (uncountable) family of weights
$ \overline{V}$ associated with $ \nu_p.$ Then Theorem 14.7.8 (and its proof) in \cite{Ja} show that $ \{ e_n : n \in\N \} $
is an absolute basis of $ \ces(p\mbox{-}). $
\end{proof}

\begin{Rem} \label{R3.7}  {\rm Every absolute basis is an unconditional basis, \cite[p.314]{Ja}. Hence,    the Schauder basis
$ \{ e_n : n \in \N \} $ of $ \ces (p\mbox{-}), 1 < p \leq \infty ,$ is actually an unconditional basis. }
\end{Rem}

\section{The non-normable spaces $ d (p+) $ and $ d (p\mbox{-})$} \label{S4}

Fix $ p \in[1, \infty ).$ For any decreasing sequence $ \{ p_k \} ^\infty _{ k = 1} $ satisfying \eqref{3.1} the Fr\'{e}chet space $ d (p+)
:= \bigcap_{q >  p} d (q) =  \bigcap ^\infty _{ k = 1} d (p_k) $ is  defined relative to the increasing sequence of norms on
$ d (p+)$ given  by $ \|  \cdot \| _{ d (p_k)}  : x \longmapsto \| x \| _{d ( p_k)} , $ for $ k \in \N ;$ see \eqref{1.6}. Similarly,
let $ p \in (1, \infty ].$  For any increasing sequence $ \{ p_k \} ^\infty _{ k = 1} $ satisfying \eqref{3.7}  we define the  (LB)-space
$ d (p\mbox{-}) := \bigcup_{1 < q < p} d (q) = \bigcup^\infty _{k = 1}  d (p_k)$, equipped with the inductive limit topology. Since the Banach spaces $ d (q),$ for $ 1 <  q < \infty ,$ are solid,
both $ d (p+)$ and $ d (p\mbox{-})$ are \textit{solid}\/ in $ \C^{\N} .$ The aim of this section is to identify properties of this new class of
spaces and to compare them with those of $ \ell_{p+} , \ces (p\mbox{+}) $ and $ \ell_{p_{\mbox{-}}} , \ces (p\mbox{-})$ presented in Section \ref{S3}. It turns out that the
spaces $ d (  p+), $ resp. $ d (p\mbox{-}),$ have certain similarities with $ \ces (p+), $  resp. $ \ces (p\mbox{-}),$ but there are also major differences.

We begin with two lemmas which record a few basic features of the spaces $ d (p+) $ and $ d (p\mbox{-}).$ Since $ d (p) \subseteq \C^{\N},$  with a
continuous inclusion, for each $ 1 < p < \infty , $ \cite[Proposition 2.7(vi)]{BR1}, it follows that
\begin{equation} \label{4.1}
d (p) \subseteq d (p+) \subseteq \C^{\N} , \quad  1 < p < \infty ,
\end{equation}
and $ d (1+) \subseteq \C ^{\N} ,$ all with continuous inclusions, as well as
\begin{equation}  \label{4.2}
d (p+) \subseteq d (q+) , \quad 1 \leq p \leq q < \infty ,
\end{equation}
where one needs to invoke part (c) of Proposition 2.1(ii) to establish the  continuity of the inclusion in \eqref{4.2}.

\begin{Lem}  \label{L4.1}  Let $ 1 \leq p < \infty . $
\begin{itemize}
\item[{\rm (i)}] The canonical vectors $ \{  e_n : n \in\N \}$  form  an unconditional basis in $ d (p+).$

\item[{\rm (ii)}] The Fr\'{e}chet space $ d (p+)$ is reflexive and separable.

\item[{\rm (iii)}] The inclusion in \eqref{4.2} is proper whenever $ 1 \leq p < q < \infty .$
\end{itemize}
\end{Lem}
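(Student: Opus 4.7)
\emph{Plan.} All three claims should fall out of the projective-limit description $d(p+) = \bigcap_{k} d(p_k)$ combined with facts already recorded for the Banach steps $d(p_k)$.

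For (i), my plan is to lift the unconditional basis property step by step. By Proposition \ref{P2.1}(i), $\{e_n : n \in \N\}$ is an unconditional basis of every $d(p_k)$ with canonical coefficient extraction $x \mapsto x_n$. Since the Fr\'{e}chet topology of $d(p+)$ is the projective topology generated by the family of norms $\{\|\cdot\|_{d(p_k)}\}_k$, unconditional convergence of $\sum_{n} x_n e_n$ to $x$ in every step transfers to unconditional convergence in $d(p+)$. Uniqueness of the coefficients is immediate from Lemma \ref{L2.3}(ii), which makes each coordinate functional $x \mapsto x_n$ continuous on $d(p+)$.

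For (ii), separability is then automatic: finite rational linear combinations of $\{e_n : n \in \N\}$ form a countable dense subset. For reflexivity, my plan is to invoke Proposition \ref{P2.1}(ii)(c), according to which every linking map $d(p_{k+1}) \hookrightarrow d(p_k)$ is compact. This makes $d(p+)$ a Fr\'{e}chet-Schwartz space and hence Montel, in particular reflexive. (As a backup argument, bounded subsets of $d(p+)$ are bounded, thus weakly relatively compact, in each reflexive $d(p_k)$; a standard diagonal argument then delivers semi-reflexivity of the Fr\'{e}chet space $d(p+)$.)

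For (iii), given $1 \leq p < q < \infty$, the plan is to exploit the strict containment at the Banach level. Choose reals $p < p' < r < q$; Proposition \ref{P2.1}(ii)(c) yields $d(p') \subsetneqq d(r)$, so I pick $x \in d(r) \setminus d(p')$. Every $s > q$ satisfies $s > r$, so $x \in d(r) \subseteq d(s)$ and hence $x \in d(q+)$; on the other hand $p' > p$ and $x \notin d(p')$, which forces $x \notin d(p+)$. I do not anticipate a genuine obstacle in any part; each step is a routine transfer of Banach-level information through the projective limit. The only mild subtlety is stating reflexivity without overclaiming (the paper subsequently strengthens $d(p+)$ to be isomorphic to a power series space of infinite order), but the compact-linking observation delivers exactly the amount required here.
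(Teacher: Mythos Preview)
Your proposal is correct and, for part (i), matches the paper's proof essentially verbatim: both lift unconditional convergence from each Banach step $d(p_k)$ to the projective limit via Proposition~\ref{P2.1}(i).

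For parts (ii) and (iii) your arguments differ mildly from the paper's. In (ii) the paper simply invokes the general principle that a reduced projective limit of reflexive Banach spaces is reflexive (as used earlier for $\ell_{p+}$ via \cite[Proposition~25.15]{MV}), whereas you argue through compactness of the linking maps $d(p_{k+1}) \hookrightarrow d(p_k)$ to get Fr\'{e}chet--Schwartz, hence Montel, hence reflexive. Your route proves more than is asked (and indeed anticipates Proposition~\ref{P4.3}(ii) and Corollary~\ref{C4.5}(ii)), while the paper's is the minimal argument for the stated claim. In (iii) the paper argues by contradiction---if $d(p+) = d(q+)$ then for any $r \in (p,q)$ the chain $d(p+) \subseteq d(r) \subseteq d(q) \subseteq d(q+)$ collapses, forcing $d(r) = d(q)$ contrary to Proposition~\ref{P2.1}(ii)(c)---whereas you construct an explicit witness $x \in d(q+) \setminus d(p+)$ directly. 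Both are straightforward; your version is slightly more concrete, the paper's slightly more economical.
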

\begin{proof}  \ (i) \ Fix $  p \in [1, \infty )$ and recall that $ d (p+) = \bigcap^\infty _{ k = 1}  d (p_k)$ with $ \{ e_n : n \in \N \} $
forming an unconditional basis in each Banach space $ d (p_k) ,$ for $ k \in \N;$ see Proposition 2.1(i). So, given
$ x \in d  (p+)$ and any permutation  $ \pi : \N \lra \N $ it is the case that  $ \lim_{N \to \infty} \| x - \sum^N_{ n = 1} x _{ \pi (n)} e _{\pi(n)} \| _{d (p_k)}  = 0 $
for each $  k \in \N,$ that is, $ \lim_{N \to \infty} \sum^N _{ n = 1} x_{\pi (n)} e_{\pi (n)}  = x $ in $ d (p+).  $ Hence, the
series $ \sum^\infty_{ n = 1} x_n e_n $ is unconditionally convergent to $ x $  in $ d (p+).$

 (ii) \ The reflexivity of the Banach spaces $ d (q), 1 < q < \infty,$ implies the reflexivity  of $ d (p+). $ The separability of $ d (p+) $ is clear from part (i).

 (iii) Suppose that $ 1 \leq p < q < \infty $ satisfy $ d (p+) = d (q+).$ For any fixed $ r \in (p,q) $ it would follow from the containments
 $ d ( p+) \subseteq d (r) \subseteq d (q) \subseteq d (q+) $
 that $ d (r) = d (q).$ This contradicts part (c) of Proposition \ref{P2.1}(ii).
 \end{proof}

 According to parts (d), (e) of Proposition \ref{P2.1}(ii) it is the case that
 \begin{equation}  \textstyle \label{4.3}
 d (p) \subseteq \ell_q \subseteq \ces (r), \quad  1 < p \leq q \leq r < \infty ,
 \end{equation}
 with continuous inclusions, which implies that
 \begin{equation} \textstyle \label{4.4}
 d (p+) \subseteq \ell_{q+} \subseteq \ces (r+), \quad 1 \leq p \leq q \leq r < \infty ,
 \end{equation}
 also with continuous inclusions. A similar argument to the proof of part (iii) of Lemma \ref{L4.1} shows that the containments in \eqref{4.4}
 are always \textit{proper}.

 \begin{Lem} \label{L4.2} \begin{itemize}
 \item[{\rm (i)}] Each  (LB)-space $ d (p\mbox{-}), $ for $ 1 < p \leq \infty ,$ is a (DFS)-space. In particular, it is a Montel space.

 \item[{\rm (ii)}] With continuous inclusions it is the case that
 \begin{equation} \label{4.5}
 d (p\mbox{-}) \subseteq d (q\mbox{-}) \subseteq \C^{\N} , \quad 1 < p \leq q \leq \infty .
 \end{equation}
  In addition,  if $ p <  q ,$ then $ d (p\mbox{-}) \subsetneqq d (q\mbox{-}). $
  \end{itemize}
  \end{Lem}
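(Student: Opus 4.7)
The plan is to dispatch (i) using the abstract machinery collected just before Proposition \ref{P3.5}, and to dispatch (ii) with direct applications of Proposition \ref{P2.1}(ii)(c), Lemma \ref{L2.3}(ii), and the universal property of locally convex inductive limits.

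For part (i), fix $1 < p \leq \infty$ and a strictly increasing sequence $\{p_k\}_{k=1}^\infty \subseteq (1,p)$ with $p_k \uparrow p$, so that $d(p\mbox{-}) = \mathrm{ind}_k\, d(p_k)$. Since $p_k < p_{k+1}$, the linking inclusion $d(p_k) \hookrightarrow d(p_{k+1})$ is \emph{compact}, by the compactness clause of Proposition \ref{P2.1}(ii)(c). Thus $d(p\mbox{-})$ is a countable inductive limit of Banach spaces with compact linking maps, which is precisely the characterization of a (DFS)-space recalled in the paragraph preceding Proposition \ref{P3.5}. Every (DFS)-space is complete and Montel, giving the second assertion.

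For the continuity statements in part (ii), observe that if $1 < p \leq q \leq \infty$ then every step $d(p_k)$ of $d(p\mbox{-})$ satisfies $p_k < p \leq q$, so $d(p_k)$ is also one of the Banach spaces whose inductive limit defines $d(q\mbox{-})$; the canonical embedding $d(p_k) \hookrightarrow d(q\mbox{-})$ is therefore continuous, and the universal property of inductive limits yields continuity of $d(p\mbox{-}) \hookrightarrow d(q\mbox{-})$. For the continuity of $d(q\mbox{-}) \subseteq \C^\N$, Lemma \ref{L2.3}(ii) ensures $|x_n| \leq \|x\|_{d(r)}$ for every $r \in (1,\infty)$ and every $x \in d(r)$, so each step $d(r) \hookrightarrow \C^\N$ is continuous; invoking the universal property once more provides continuity of $d(q\mbox{-}) \hookrightarrow \C^\N$.

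For the strict containment when $p < q$, the key is to insert two intermediate exponents. Choose $r_1, r_2$ with $p < r_1 < r_2 < q$ (possible even if $q=\infty$). By Proposition \ref{P2.1}(ii)(c), $d(r_1) \subsetneqq d(r_2)$, so pick $x \in d(r_2) \setminus d(r_1)$. Since $r_2 < q$, $x \in d(r_2) \subseteq d(q\mbox{-})$. On the other hand, for every step $d(s)$ of $d(p\mbox{-})$ we have $s < p < r_1$, so Proposition \ref{P2.1}(ii)(c) gives $d(s) \subseteq d(r_1)$, whence $d(p\mbox{-}) \subseteq d(r_1)$; since $x \notin d(r_1)$, we conclude $x \notin d(p\mbox{-})$. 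Thus $x \in d(q\mbox{-}) \setminus d(p\mbox{-})$.

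The verification is essentially routine bookkeeping; the only point requiring any care is ensuring that the universal property arguments genuinely apply (i.e.\ that each step of the smaller inductive limit actually lands continuously in the larger one), and the choice of two strictly intermediate exponents $p < r_1 < r_2 < q$ rather than a single one (so that the strict containment $d(r_1) \subsetneqq d(r_2)$ from Proposition \ref{P2.1}(ii)(c) is directly available).
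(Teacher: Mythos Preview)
Your proof is correct and follows essentially the same route as the paper: part (i) via compact linking maps and the (DFS) characterization, and part (ii) via Lemma \ref{L2.3}(ii) and the universal property of inductive limits, all invoking Proposition \ref{P2.1}(ii)(c). The only difference is cosmetic: for the strict containment the paper uses a single intermediate exponent $r\in(p,q)$ and picks $x\in d(r)\setminus d(p)$ directly (noting $d(p\mbox{-})\subseteq d(p)$), so your two-exponent detour $p<r_1<r_2<q$ is unnecessary---one intermediate value suffices.
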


   \begin{proof}   \ (i) \ Since the natural inclusion $ d (q) \subseteq d (r)$ is \textit{compact}\/ whenever
  $ 1 < q < r < \infty $ (cf. Proposition \ref{2.1}(ii)(c)), the (LB)-space $ d (p\mbox{-})$ is a (DFS)-space,
  \cite[Proposition 25.20]{MV}.

  (ii) \ An argument similar to that used to establish the continuity  of the second inclusion in \eqref{3.12} also
  applies here (by using part (ii) of Lemma \ref{2.3} in place of part (i)) to show that the second inclusion
  in \eqref{4.5}  is continuous.
  Similarly, the continuity of the inclusion $ d (s) \subseteq d (t) $ for $ 1 < s \leq t < \infty $ (cf.\ Proposition \ref{P2.1}(ii)(c)),
  implies that the inclusion $ d (p\mbox{-}) \subseteq d (q\mbox{-})$ in \eqref{4.5} is  continuous; see, for example, \cite[Lemma 17(i)]{ABR2}.

  If $ 1 <  p <   q \leq \infty ,$ then $ d (p\mbox{-}) \subsetneqq d (q\mbox{-}).$ Indeed, fix $ r \in (p,q). $ By Proposition \ref{P2.1}(ii)(c) there exists $ x \in
  d (r) \bsl d (p). $ Then $ x \in d (q\mbox{-})  $ but, $ x \not\in d (p\mbox{-})$ as $ d (s) \subseteq d (p) $ for all $ 1 < s < p .$
  \end{proof}

  The continuity of the inclusions \eqref{4.3} imply that
  \begin{equation} \label{4.6}
  d (p\mbox{-}) \subseteq \ell_{q_ {\mbox{-}}} \subseteq \ces ( r\mbox{-}) , \quad 1   < p \leq q \leq r \leq \infty ,
  \end{equation}
  with continuous inclusions; see  Lemma 17(i) of \cite{ABR2}. A similar argument to that used in the proof of  Lemma \ref{L4.2}(ii)
  shows that the containments in \eqref{4.6} are always \textit{proper}.

  Similarly, the continuity of the inclusion $ d (r) \subseteq d (p) $ whenever $ 1 < r \leq p < \infty  $ (cf.\ Proposition \ref{2.1}(ii)(c)), together
  with \cite[Lemma 17(i)]{ABR2} applied when $ T $ is the inclusion map from $ d (r)$ into $ d (p\mbox{-}), $ shows that
  \begin{equation} \label{4.7}
  d (r) \subseteq d (p\mbox{-}), \quad 1 < r < p \leq \infty,
  \end{equation}
  with continuous inclusions. An analogous argument to that used in the proof of part (ii) in Lemma \ref{L4.2} shows that $ d (r) \subsetneqq d (p\mbox{-}). $

  The following descriptions of $ d (p+)$ and $ d (p\mbox{-})$ exhibit important features of these spaces.

  \begin{Prop} \label{P4.3}  \begin{itemize} \item[{\rm (i)}] For  $ 1< p \leq \infty $ the map
  $ \Phi _p : ( \ces (p'+))' \lra d (p\mbox{-}) $ given by
  $$ \textstyle
  \Phi_{p}  (f) :=  ( \langle e_n, f \rangle ) _n , \quad f \in ( \ces ( p' +))',
  $$
  is a linear bijection and   a topological isomorphism of the (DFS)-space
  $ ( \ces (p'+))'_\beta  $ onto the (DFS)-space $ d (p\mbox{-}) = \mbox{ind}_k d (p_k), $
  where $ \{ p_k \}  ^\infty _{ k = 1} $ satisfies \eqref{3.7}.

  \item[(ii)] For each $ 1 \leq p < \infty $ the map $ \Psi _p : ( \ces (p'\mbox{-}))' \lra d (p+)$ given by
  $$  \textstyle
  \Psi _p (g) := ( \langle e_n, g \rangle ) _n , \quad g \in (\ces (p'\mbox{-}))',
  $$
  is a linear bijection and is a topological isomorphism of the Fr\'{e}chet-Schwartz space $ ( \ces (p'\mbox{-}))'_\beta $ onto the
  Fr\'{e}chet-Schwartz space $ d (p+).$
  \end{itemize}
  \end{Prop}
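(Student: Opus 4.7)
The overall strategy is to bootstrap from the Banach-level identification $d(q)\simeq (\ces(q'))'$ recalled after \eqref{1.6}, in which every $f\in(\ces(q'))'$ is encoded by the sequence $(\langle e_n,f\rangle)_n$ because $\{e_n:n\in\N\}$ is a Schauder basis of $\ces(q')$ (Proposition~\ref{P2.1}(i)). Since $\ces(p'+)$ is a projective limit of the Banach spaces $\ces(p'_k)$ and $\ces(p'\mbox{-})$ is an inductive limit of the $\ces(p'_k)$, one should transfer these limits to the dual side and use that on each step the duality is realised by $f\mapsto(\langle e_n,f\rangle)_n$. The topological structure on each side (Fr\'echet--Schwartz/DFS) is already supplied by Propositions~\ref{P3.2}(ii) and~\ref{P3.5}(ii), so only the identification of the underlying spaces and the continuity of the duality map has to be established.

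For part~(i), pick $\{p_k\}$ increasing to $p$ as in \eqref{3.7}; then $\{p'_k\}$ decreases to $p'$ and $\ces(p'+)=\bigcap_k\ces(p'_k)$ with continuous inclusions. Because $\ces(p'+)$ is Fr\'echet--Schwartz (Proposition~\ref{P3.2}(ii)), the linking maps in the projective spectrum are compact, so by the standard duality for Fr\'echet--Schwartz spaces (e.g., \cite[Proposition~25.20]{MV}) its strong dual is the regular (DFS)-inductive limit $\mathrm{ind}_k(\ces(p'_k))'_\beta$, where each $(\ces(p'_k))'_\beta\simeq d(p_k)$ isometrically via $f\mapsto(\langle e_n,f\rangle)_n$ (Section~\ref{S1}). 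These Banach isomorphisms are compatible with the spectra because the adjoint of the inclusion $\ces(p'_{k+1})\subseteq\ces(p'_k)$ corresponds precisely to the inclusion $d(p_k)\subseteq d(p_{k+1})$ from Proposition~\ref{P2.1}(ii)(c) (evaluate both on each $e_n$). Passing to the inductive limit yields $(\ces(p'+))'_\beta\simeq\mathrm{ind}_k d(p_k)=d(p\mbox{-})$, and the resulting isomorphism is exactly $\Phi_p$; bijectivity follows since $(\langle e_n,f\rangle)_n=0$ forces $f=0$ on the total set $\{e_n\}$, hence on $\ces(p'+)$.

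For part~(ii), with $\{p_k\}$ decreasing to $p$ as in \eqref{3.1} one has $\{p'_k\}$ increasing to $p'$ and $\ces(p'\mbox{-})=\mathrm{ind}_k\ces(p'_k)$, which is a (DFS)-space by Proposition~\ref{P3.5}(ii). Its strong dual is then the Fr\'echet--Schwartz projective limit $\mathrm{proj}_k(\ces(p'_k))'_\beta$, again by \cite[Proposition~25.20]{MV}. Identifying each step via $g\mapsto(\langle e_n,g\rangle)_n$ gives the compatible family of isomorphisms $(\ces(p'_k))'_\beta\simeq d(p_k)$, and the dual of the inclusion $\ces(p'_k)\subseteq\ces(p'_{k+1})$ matches the inclusion $d(p_{k+1})\subseteq d(p_k)$ of Proposition~\ref{P2.1}(ii)(c). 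The projective limit of this system is, by definition, $\bigcap_k d(p_k)=d(p+)$ with its Fr\'echet topology, and the induced isomorphism is exactly $\Psi_p$. Injectivity of $\Psi_p$ follows once more from the totality of $\{e_n\}$ in $\ces(p'\mbox{-})$, and surjectivity follows from the fact that any $x\in d(p+)$ defines a functional $u\mapsto\sum_n u_nx_n$ whose continuity on each $\ces(p'_k)$ is inherited from the Banach-level pairing of Section~\ref{S1}; hence it is continuous on the inductive limit.

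The delicate step in both parts is the compatibility of the Banach-level identifications with the projective/inductive spectra, i.e.\ verifying that the adjoint of each inclusion of $\ces$-spaces corresponds under the $e_n$-duality to the inclusion of the appropriate $d$-spaces. This is where one needs to exploit the explicit formula $\langle u,x\rangle=\sum_n u_nx_n$ together with the absolute (in particular unconditional) character of the basis $\{e_n\}$ in each $\ces(p'_k)$, so that a functional is uniquely determined, and norm-bounded, by its sequence of coefficients. Once this compatibility is in place, the rest is a formal application of the duality theorem for Fr\'echet--Schwartz and (DFS)-spaces.
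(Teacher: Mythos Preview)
Your approach is correct and is precisely the argument the paper has in mind: for part~(i) the paper simply cites \cite[Proposition~4.6]{ABR1}, and for part~(ii) it says that the proof of that proposition can be adapted using \cite[\S22.7, Theorem~(9)]{K\"{o}} and \cite[Proposition~25.20]{MV}; what you have written is essentially that adaptation spelled out, namely dualising the projective/inductive spectrum step by step via the Banach identifications $(\ces(q'))'\simeq d(q)$ and checking compatibility of the linking maps through the explicit pairing $\langle u,x\rangle=\sum_n u_nx_n$. One small slip: the step isomorphisms $(\ces(p'_k))'\simeq d(p_k)$ are only \emph{isomorphic}, not isometric (see the sentence after \eqref{1.6}); this is harmless, since topological isomorphism at each step is all that is needed to pass to the limits.
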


  \begin{proof} \ (i) \ By Proposition \ref{P3.2}(ii) the space $ \ces (p'+) $ is Fr\'{e}chet-Schwartz and by Lemma \ref{L4.2}(i) the space
  $ d (p\mbox{-}) $ is a  (DFS)-space. That $ \Phi_p $ is a  bijection and topological isomorphism follows from  \cite[Proposition 4.6]{ABR1}.

  (ii) \ Proposition \ref{3.5}(ii) shows that $ \ces (p'\mbox{-})$ is a (DFS)-space with compact linking maps (cf.\ Proposition  \ref{P2.1}(ii)(b)).
  Using \cite[\S 22.7, Theorem (9)]{K\"{o}}, and \cite[Proposition 25.20]{MV} it is routine to adapt the proof of Proposition 4.6 in \cite{ABR1}
  to prove that $ \Psi_p $ is a bijection and topological isomorphism.
 \end{proof}
\begin{Rem} \label{R4.4} {\rm  Since each space $ d (p+), 1 \leq p < \infty ,$ and $ d (p\mbox{-}), 1 < p \leq \infty, $ is reflexive, it is isomorphic to its bidual. So,
Proposition \ref{P4.3} implies that
$$
( d (p+))'_\beta \simeq (( \ces ( p'\mbox{-})'_\beta ) '_\beta \simeq\ces (p'\mbox{-}), \quad 1 \leq p < \infty ,
$$
and that
$$
 ( d (p \mbox{-} ))'_\beta \simeq (( \ces (p' + )'_\beta ) '_\beta \simeq \ces (p'+ ), \quad 1 < p  \leq \infty .
 $$ }
 \end{Rem}
 \begin{Cor}   \label{C4.5} \begin{itemize} \item[{\rm (i)}] Each space
 $ d (p\mbox{-}),$ for $ 1 < p \leq \infty ,$ is a non-nuclear, (DFS)-space which is isomorphic to the strong dual $ ( \Lambda ^1_0 ( \alpha))'_\beta $ of the
 power series Fr\'{e}chet space $ \Lambda ^1 _0 (\alpha)$ of finite type {\rm 0} and order {\rm 1}.

 \item[{\rm (ii)}] Each space $ d (p+),$ for $ 1 \leq p < \infty ,$ is a non-nuclear, Fr\'{e}chet-Schwartz space which is isomorphic to the power series
 space $ \Lambda^ \infty _0 (\alpha) $ of finite type {\rm 0}  and order infinity.
 \end{itemize}
 \end{Cor}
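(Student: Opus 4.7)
The plan is to read both assertions directly off the reflexivity duality identities of Remark~\ref{R4.4} combined with the explicit power series descriptions of $\ces(p'+)$ and $\ces(p'\mbox{-})$ furnished by Propositions~\ref{P3.2}(ii) and~\ref{P3.5}(ii). The spaces $d(p+)$ and $d(p\mbox{-})$ are both reflexive (by Lemma~\ref{L4.1}(ii), and because Lemma~\ref{L4.2}(i) ensures $d(p\mbox{-})$ is (DFS), hence Montel), so any topological isomorphism of their strong duals lifts to the spaces themselves by biduality.

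For part (i), Lemma~\ref{L4.2}(i) already delivers the (DFS) conclusion. By biduality, Remark~\ref{R4.4}, and Proposition~\ref{P3.2}(ii),
\[
d(p\mbox{-}) \simeq \bigl((d(p\mbox{-}))'_\beta\bigr)'_\beta \simeq (\ces(p'+))'_\beta \simeq (\Lambda^1_0(\alpha))'_\beta,
\]
which is the asserted isomorphism. Non-nuclearity then follows because $\Lambda^1_0(\alpha) \simeq \ces(p'+)$ is non-nuclear by Proposition~\ref{P3.2}(ii), together with the standard fact that a Fr\'{e}chet space is nuclear if and only if its strong dual is.

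For part (ii), Proposition~\ref{P4.3}(ii) already gives the Fr\'{e}chet-Schwartz property. By biduality, Remark~\ref{R4.4}, and Proposition~\ref{P3.5}(ii) (applied with $p'$ in place of $p$, so that $1/(p')' = 1/p$),
\[
d(p+) \simeq (\ces(p'\mbox{-}))'_\beta \simeq \bigl((\Lambda^\infty_{1/p}(\alpha))'_\beta\bigr)'_\beta \simeq \Lambda^\infty_{1/p}(\alpha).
\]
Non-nuclearity is inherited, via duality, from the non-nuclearity of $\ces(p'\mbox{-})$ recorded in Proposition~\ref{P3.5}(ii). It then remains to reconcile $\Lambda^\infty_{1/p}(\alpha)$ with the fixed model $\Lambda^\infty_0(\alpha)$ named in the corollary. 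This is the step I expect to be the main obstacle, but it is not deep: since $\alpha_n = \log n$, the diagonal map $(x_n)_n \longmapsto (n^{1/p} x_n)_n$ is a topological isomorphism between $\Lambda^\infty_{1/p}(\alpha)$ and $\Lambda^\infty_0(\alpha)$, as one sees from the identity $\sup_n |n^{1/p} x_n| \, n^s = \sup_n |x_n|\, n^{1/p+s}$, which simply shifts the admissible range of $s$ by $1/p$ and thereby matches the defining fundamental systems of seminorms. This is the precise counterpart, in the infinite-order setting, of the type-collapse used in Corollary~3.2 of~\cite{ABR1}.
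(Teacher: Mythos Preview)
Your proof is correct and follows essentially the same route as the paper: both derive the isomorphisms from Proposition~\ref{P4.3} (you via Remark~\ref{R4.4} and biduality) together with Propositions~\ref{P3.2}(ii) and~\ref{P3.5}(ii), and both obtain non-nuclearity by dualizing the non-nuclearity of the corresponding $\ces$-spaces (the paper cites Pietsch~\cite{P1} for this). Your explicit diagonal isomorphism $\Lambda^\infty_{1/p}(\alpha) \simeq \Lambda^\infty_0(\alpha)$ is a detail the paper leaves implicit.
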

 \begin{proof} \  In view of Propositions  \ref{P3.2}(ii), \ref{P3.5}(ii), and \ref{P4.3}, only the non-nuclearity of the spaces in parts (i) and (ii) needs
 to be addressed. According to Proposition \ref{P3.2}(ii) and Proposition \ref{P3.5}(ii) the spaces $ \ces (q+), 1 \leq   q < \infty ,$ and $ \ces (q\mbox{-}),
 1 < q   \leq \infty ,$ are all non-nuclear.  Then  \cite[p.78, Theorem$'$]{P1} implies  that the strong dual spaces
 $ d (p\mbox{-}) \simeq ( \ces (p'+))'_\beta ,$ for $ 1 < p \leq \infty,$ and $ d (p+) \cong ( \ces (p'\mbox{-}))'_\beta,$ for $ 1 \leq p < \infty, $ are also non-nuclear.
 \end{proof}

 Proposition \ref{P3.6} implies that the unconditional basis $ \{ e_n : n \in\N \} $  in $ \ces (p+), 1 \leq p < \infty,$ and
 in  $ \ces (p\mbox{-}), 1 < p \leq \infty$ (cf.\ Proposition \ref{P3.2}(i) and Remark \ref{R3.7}) is actually an absolute basis. This turns out \textit{not}\/ to be the case for the spaces
 $ d (p+) $ and $ d (p\mbox{-}).$ For the definition of the operator ideals $ \cL_p, 1 \leq p \leq \infty ,$ of all $p$-factorable operators we refer to
 \cite{P2}; see also \cite{Ho}, where the terminology of an $ \cL_p$-locally convex space is used.

 \begin{Thm} \label{T4.6} The canonical vectors $ \{ e_n : n \in \N \} $ form an unconditional basis for each Fr\'{e}chet-Schwartz space $ d (p+), 1 \leq p < \infty ,$
 and for each (DFS)-space $ d (p\mbox{-}), 1  < p \leq \infty .$ However, none of these spaces have any absolute basis.
 \end{Thm}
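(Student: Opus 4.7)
For the unconditional basis claim, the Fr\'echet case $d(p+)$ is contained in Lemma~\ref{L4.1}(i). For the (DFS)-case $d(p\mbox{-}) = \mbox{ind}_k d(p_k)$, I would argue analogously: Lemma~\ref{L4.2}(i) makes this a (DFS)-space, hence a regular (LB)-space, so each $x \in d(p\mbox{-})$ lies in some step $d(p_k)$ where $\{e_n\}$ is unconditional by Proposition~\ref{P2.1}(i). Thus for every permutation $\pi : \N \to \N$ the partial sums $\sum_{n=1}^N x_{\pi(n)} e_{\pi(n)}$ converge to $x$ in $d(p_k)$; continuity of the inclusion $d(p_k) \hookrightarrow d(p\mbox{-})$ then transfers this convergence to $d(p\mbox{-})$, and uniqueness of the expansion coefficients upgrades this to the Schauder basis statement.

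For the absolute basis claim I would proceed by contradiction. A Fr\'echet space with an absolute basis $\{u_n\}$ is automatically isomorphic to a K\"othe echelon space $\lambda^1(A)$ via the canonical coordinate map $x = \sum_n x_n u_n \mapsto (x_n)$, with matrix $a_k(n) := p_k(u_n)$ for a fundamental sequence $\{p_k\}$ of seminorms; in the (DFS) setting one analogously obtains an isomorphism with a co-echelon space $k_1(V)$. In the language of \cite{P2}, \cite{Ho} this forces $d(p+)$ (respectively $d(p\mbox{-})$) to be an $\cL_1$-locally convex space.

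On the other hand, the very definition $d(p) = \{x \in \ell_\infty : \hat x \in \ell_p\}$ in~\eqref{1.5}, combined with the pointwise bound $|x_n| \leq \|x\|_{d(p)}$ of Lemma~\ref{L2.3}(ii), exhibits each $d(p)$ as an $\ell_\infty$-type Banach space; this $\cL_\infty$-structure is then inherited by the Fr\'echet intersection $d(p+)$ and the (DFS) union $d(p\mbox{-})$. The main obstacle --- and key invocation --- is the rigidity step: by the standard characterization of nuclearity as the intersection of the operator ideals $\cL_1$ and $\cL_\infty$ (see \cite{P2}), a locally convex space which is simultaneously $\cL_1$ and $\cL_\infty$ must be nuclear. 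Coupled with the non-nuclearity of $d(p+)$ and $d(p\mbox{-})$ recorded in Corollary~\ref{C4.5}, this supplies the contradiction and completes the proof. Alternatively, one can phrase the obstruction at the level of the identifications from Corollary~\ref{C4.5}: the infinite-order power series space $\Lambda^\infty_0(\alpha)$ (and its (DFS) analogue) is isomorphic to an order-one K\"othe space only if it is nuclear, which is again prevented by Corollary~\ref{C4.5}.
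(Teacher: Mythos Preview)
Your argument for the unconditional basis is correct and essentially identical to the paper's: Lemma~\ref{L4.1}(i) for $d(p+)$, and the transfer argument via the continuous inclusion $d(r)\subseteq d(p\mbox{-})$ for the (DFS)-case.

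Your overall strategy for the absolute basis claim---absolute basis forces $\cL_1$, the space is already $\cL_\infty$, hence nuclear by \cite[Theorem 29.7.6]{P2}, contradicting Corollary~\ref{C4.5}---is exactly the paper's route. However, your \emph{primary} justification of the $\cL_\infty$-property has a genuine gap. The inequality $|x_n|\le \|x\|_{d(p)}$ from Lemma~\ref{L2.3}(ii) merely says that the inclusion $d(p)\hookrightarrow \ell_\infty$ is bounded; it does \emph{not} make $d(p)$ (or $d(p+)$, $d(p\mbox{-})$) an $\cL_\infty$-space in the sense of \cite{P2}, \cite{Ho}. Indeed, the same inequality holds for every $\ell_q$, $1<q<\infty$, yet $\ell_q$ is not an $\cL_\infty$-space. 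What is needed is that the linking maps between the local Banach spaces factor through $L_\infty$-spaces, and this is not a consequence of a continuous embedding into $\ell_\infty$.

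The correct source of the $\cL_\infty$-structure is precisely your ``alternative'': Corollary~\ref{C4.5} identifies $d(p+)$ with the power series space $\Lambda^\infty_0(\alpha)$ of \emph{infinite order}, which is by definition a K\"othe echelon space of order $\infty$ (its local Banach spaces are weighted $\ell_\infty$-spaces), hence $\cL_\infty$; similarly $d(p\mbox{-})\simeq (\Lambda^1_0(\alpha))'_\beta$ via Proposition~\ref{P4.3}(i) and Corollary~\ref{C4.5}(i). This is what the paper invokes. So your alternative is the actual proof; the paragraph appealing to Lemma~\ref{L2.3}(ii) should be dropped.

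One minor point: in the (DFS)-case the paper cites \cite[Theorem 14.7.8]{Ja} to conclude that a complete space with an absolute basis is isomorphic to a K\"othe sequence space of order~1 (not necessarily a co-echelon space $k_1(V)$ as you wrote), but this does not affect the $\cL_1$-conclusion.
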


 \begin{proof} \
 According to Lemma \ref{L4.1}(i), $ \{ e_n : n \in \N \} $ is an unconditional basis in $ d (p+), $  for $ 1 \leq p < \infty .$

 Fix $ 1 < p \leq \infty $ and let $ x \in d (p\mbox{-}).$ Then $ x \in d (r) $ for some $ 1 < r < p .$ Given any permutation $ \pi : \N \lra \N ,$
 it follows from Proposition \ref{P2.1}(i) applied to $ d (r),$ that $ \lim_{N \to \infty} \| x - \sum^N_{ n = 1}  x_{\pi (n) } e _{ \pi (n)} \|  _{ d(r)} = 0 .  $
 Since $ d (r) \subseteq d (p\mbox{-}) $ continuously (cf.\ \eqref{4.7}), it follows that $ \lim_{ N \to \infty}  \sum^N_{ n = 1} x _{ \pi (n)}  e_{\pi (n)} = x $
 in $ d (p\mbox{-}),$ that is, $ x = \sum^\infty _{ n = 1} x _{\pi(n)} e_{ \pi (n)} $ with the series converging in $ d (p\mbox{-}).$ But, $ \pi $ is arbitrary
 and so the series $ \sum^\infty _{ n = 1} x_n e_n $ converges unconditionally to $ x $ in $ d (p\mbox{-}).$ Hence, $ \{ e_n : n  \in \N \} $ is an unconditional basis
 for $ d (p\mbox{-}). $

 Let $ 1 \leq p < \infty$ and suppose that $ d (p+)$ has some absolute basis. By Proposition  27.26  of \cite{MV}, $ d (p+)$ would be isomorphic to a  K\"{o}the echelon space of order 1,
 that is, $ d (p+)$ is an $ \cL_1$-space. On the other hand, Corollary \ref{C4.5}(ii) implies that $ d (p+)$ is also an $ \cL_\infty$-space. According
 to \cite[Theorem 29.7.6]{P2} the space $ d (p+)$ is then nuclear, which contradicts Corollary \ref{4.5}(ii). Hence, $ d (p+)$ has \textit{no}\/ absolute basis.

 Suppose, for fixed $ 1 < p \leq \infty ,$ that $ d (p\mbox{-})$ has an absolute basis. By Theorem 14.7.8 of \cite{Ja} the complete (DFS)-space $ d (p\mbox{-})$ is isomorphic
 to a K\"{o}the sequence space of order 1 and hence, is an $ \cL_1$-space. According to Proposition \ref{P4.3}(i) and Corollary \ref{4.5}(i)  the space $ d (p\mbox{-})$ is also
 an $ \cL_\infty$-space. Hence, $ d(p\mbox{-})$ is nuclear, \cite[Theorem 29.7.6]{P2}, which contradicts Corollary \ref{4.5}(i). So, $ d (p\mbox{-})$ has \textit{no}\/ absolute
 basis.
  \end{proof}
  \begin{Thm} \label{T4.7} \ {\rm (i)} \ For each pair $ 1 \leq p, q < \infty ,$ the Fr\'{e}chet space $ \ces (p+)$ is not
  isomorphic to the Fr\'{e}chet space $ d (q+). $

  {\rm (ii)} \ For each pair $ 1 < p, q \leq \infty,$ the (DFS)-space $ \ces (p\mbox{-})$ is not isomorphic to the (DFS)-space $ d (q\mbox{-}).$
  \end{Thm}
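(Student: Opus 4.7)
The plan is to use the presence or absence of an absolute basis as a distinguishing invariant, since this property is preserved under topological isomorphisms of locally convex Hausdorff spaces. Concretely, I would first record the following fact: if $T\colon X\to Y$ is a topological isomorphism and $\{u_n\}$ is an absolute basis of $X$, then $\{T(u_n)\}$ is an absolute basis of $Y$. The verification is routine: given a continuous seminorm $p$ on $Y$, the pullback $p\circ T$ is a continuous seminorm on $X$, so there exist a continuous seminorm $q'$ on $X$ and $A>0$ with $\sum_n |x_n|\,(p\circ T)(u_n)\le A\,q'(x)$ for $x=\sum_n x_n u_n$; pushing $q'$ forward via $T^{-1}$ produces the required inequality on $Y$, since $y=T(x)=\sum_n x_n T(u_n)$ has the same coefficient sequence.

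Once this transfer principle is in hand, part (i) is immediate: Proposition \ref{P3.6} states that $\{e_n:n\in\N\}$ is an absolute basis of $\ces(p+)$ for every $1\leq p<\infty$, whereas Theorem \ref{T4.6} asserts that no space $d(q+)$, for $1\leq q<\infty$, admits any absolute basis. If some topological isomorphism $T\colon\ces(p+)\to d(q+)$ existed, the transfer principle would endow $d(q+)$ with the absolute basis $\{T(e_n)\}$, contradicting Theorem \ref{T4.6}. Hence $\ces(p+)\not\simeq d(q+)$.

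Part (ii) is proved by exactly the same scheme, with the required ingredients again provided by the preceding results: Proposition \ref{P3.6} (combined with Remark \ref{R3.7}) gives that $\{e_n:n\in\N\}$ is an absolute basis of $\ces(p-)$ for all $1<p\leq\infty$, while Theorem \ref{T4.6} denies an absolute basis to every $d(q-)$, $1<q\leq\infty$. The same invariance argument then rules out any topological isomorphism $\ces(p-)\simeq d(q-)$.

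There is no serious obstacle to overcome, since Theorem \ref{T4.6} and Proposition \ref{P3.6} already do the heavy lifting; the only thing worth writing carefully is the (essentially formal) verification that possessing an absolute basis is an isomorphism invariant in the category of locally convex Hausdorff spaces. I would therefore present the argument compactly in a single short proof, citing Proposition \ref{P3.6}, Remark \ref{R3.7} and Theorem \ref{T4.6}, rather than redoing any of the underlying nuclearity/$\cL_1$/$\cL_\infty$ analysis from the proof of Theorem \ref{T4.6}.
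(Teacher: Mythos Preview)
Your proposal is correct. For part (i) you argue exactly as the paper does: the existence of an absolute basis is an isomorphism invariant, $\ces(p+)$ has one by Proposition~\ref{P3.6}, and $d(q+)$ has none by Theorem~\ref{T4.6}.

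For part (ii) you take a slightly different route from the paper. You apply the same absolute-basis invariance argument directly to the (LB)-spaces, invoking Proposition~\ref{P3.6} (which already covers $\ces(p\mbox{-})$) and Theorem~\ref{T4.6} (which already covers $d(q\mbox{-})$). The paper instead passes to strong duals: if $\ces(p\mbox{-})\simeq d(q\mbox{-})$, then $(\ces(p\mbox{-}))'_\beta\simeq d(p'+)$ would be isomorphic to $(d(q\mbox{-}))'_\beta\simeq\ces(q'+)$, contradicting part~(i). Your direct argument is marginally more economical, since it avoids the duality identifications from Proposition~\ref{P4.3} and Remark~\ref{R4.4}; the paper's version, on the other hand, makes explicit that the (LB)-case is formally dual to the Fr\'{e}chet case. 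Both are short and rely on the same underlying facts. (Your parenthetical reference to Remark~\ref{R3.7} is unnecessary: Proposition~\ref{P3.6} alone already asserts that $\{e_n\}$ is an absolute basis of $\ces(p\mbox{-})$.)
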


    \begin{proof}  \ (i) \ Fix $ 1 \leq p, q < \infty .$ Since isomorphisms between locally convex Hausdorff spaces map absolute bases to
    absolute bases, it follows from Proposition \ref{P3.6} and Theorem \ref{4.6} that $ \ces (p+) $ cannot be isomorphic to $ d (q+).$

    (ii) \ Fix $ 1 < p, q \leq \infty .$ If $ \ces (p\mbox{-})$ and $ d (q\mbox{-})$ are isomorphic, then also their strong dual spaces
    $ d (p'+)$ and $ \ces (q'+)$ are isomorphic. This contradicts part (i).
    \end{proof}

  \begin{Rem} \label{R4.8}  \ {\rm (i)  \  An alternate proof of the fact that no space $ d (p+), 1 \leq p < \infty ,$ has an absolute basis is possible. Since
  $ (d ( p+))'_\beta = \ces (p'\mbox{-}) $  \textit{has}\/ an absolute basis (cf.\ Proposition \ref{P3.6}), \textit{if}\/ also $ d (p+)$ had an absolute basis, then
  Theorem 21.10.6 of \cite{Ja} would imply that $ d (p+)$ is nuclear; contradiction to Corollary  \ref{C4.5}(ii).

  {\rm (ii)} \ A further difference between the unconditional basis $   \cE := \{ e_n : n \in \N \} $ when it is considered to belong to one of the spaces $ \ces (p+), \ces (p\mbox{-}), $
  in contrast to when it is considered in one of the spaces $ d (p+), d (p\mbox{-}), $ should be pointed out. For each $ q \in (1, \infty )$ it is known that there exist positive
  constants $ A_q, B_q $ such that
  \begin{equation} \label{4.8} \textstyle
 A_q \, n ^{-1/q'} \leq \| e_n \| _{\ces(q)} \leq  B_q \, n^{-1/ q'}, \quad n \in \N ,
  \end{equation}
  \cite[Lemma 4.7]{B}. Fix $ p \in [1,\infty )$ and consider $ \cE \subseteq \ces (p+).$ Given $ q > p $ it follows from \eqref{4.8} that
  $ \lim_{ n \to \infty} e_n = 0 $ in $ \ces (q).$ Since $ \ces (p+) = \bigcap _{ k \in \N} \ces (p_k) $ for any $ p_k \downarrow p,$ it follows that
  $ \lim_{ n \to \infty} e_n = 0 $ in $ \ces (p+).$ Similarly, consider $ \cE \subseteq \ces (p\mbox{-}) $ for any fixed $ 1 < p \leq \infty .$ Since
  $ \ces (p\mbox{-}) = \mbox{ind}_k \ces (p_k)$ for any $ 1 <  p_k \uparrow p $ is equipped with its inductive limit  topology, \cite[p.280]{MV}, each inclusion map
  $ \ces (p_k) \subseteq \ces (p\mbox{-}) $ for $ k \in \N $ is continuous. Fix any $ k_0 \in \N .$ It is clear from \eqref{4.8} that $ \lim_{ n \to \infty} e_n = 0$
  in the Banach space $ \ces (p_{k_0}). $ By the previous comment this implies that $ \lim_{ n \to \infty} e_n = 0  $ in $ \ces (p\mbox{-}).$

  On the other hand, for each $ 1 < p < \infty ,$ it is straight-forward to check that
  \begin{equation} \label{4.9}
  \| e_n \| _{ d (p)} = n ^{1/p}, \quad n  \in \N ,
  \end{equation}
  \cite[Lemma 11(ii)]{ABR5}. Since $ d (p+) = \bigcap_{ k \in \N} d (p_k) $ for $ p \in [ 1, \infty )$ with $ p_k \downarrow p $
  and $ d (p\mbox{-}) = \mbox{ind}_k d (p_k) $ for $ 1 < p \leq  \infty $ with $ 1 < p_k \uparrow p,$ it is routine to check using
  \eqref{4.9} and the nature of the bounded subsets in the spaces $ d (p+), d (p\mbox{-})$ that $ \cE $
  is an \textit{unbounded}\/ subset in every such space $ d (p+) $  and $ d (p\mbox{-}). $ In particular, $ \{ e_n : n \in\N \}  $
  \textit{cannot}\/ be a  convergent sequence in any of  these spaces. 
     }

\end{Rem}

\begin{Prop} \label{P4.9} \begin{itemize} \item[{\rm (i)}]
\begin{itemize} \item[{\rm (a)}] For each $ 1 \leq p \leq q < \infty $ both  of the Ces\`{a}ro operators
$ C :  d(p+) \lra d (q+) $ and $ C : \ces (p+) \lra d (q+) $ are continuous.

\item[{\rm (b)}] Let $ 1 \leq p < \infty $ and $ x \in \C ^{\N} .$ Then it is the case that
$$
C^2 ( | x | ) \in d (p+)  \ \mbox{ if \ and \ only \ if } \ C (| x |) \in d (p+).
$$

\item[{\rm (c)}] For each $ 1 \leq p < \infty $ the identity $ [C, d (p+)]_s = \ces ( p+) $ is valid.
\end{itemize}

\item[{\rm (ii)}] \begin{itemize}\item[{\rm (a)}]  For each $ 1 < p \leq q  \leq  \infty $ both of the Ces\`{a}ro operators
$ C : d (p\mbox{-}) \lra d (q\mbox{-}) $ and $ C  : \ces (p\mbox{-}) \lra d (p\mbox{-}) $ are continuous.

\item[{\rm (b)}] Let $ 1 < p \leq \infty $ and $ x \in \C^{\N}.$ Then it is the case that
$$
C^2 ( | x | ) \in d (p\mbox{-}) \ \mbox{ if \ and \ only \ if }  \ C (| x | ) \in d (p\mbox{-}).
$$

\item[{\rm (c)}] For each $ 1 < p \leq \infty $ the identity $ [C, d (p\mbox{-})]_s = \ces (p\mbox{-})$  is valid.
\end{itemize}
\end{itemize}
\end{Prop}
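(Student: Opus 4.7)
The strategy is to reduce each of the six statements to its Banach-space counterpart, exploiting that $d (p+) = \bigcap^\infty_{k=1} d (p_k)$ carries the projective-limit topology (with $p_k \downarrow p$) while $d (p\mbox{-}) = \mbox{ind}_k \, d (p_k)$ is an (LB)-space (with $p_k \uparrow p$). The Banach-level inputs already in hand are the continuity of $C : d (r) \lra d (r)$ and of $C : \ces (r) \lra d (r)$ for each $1 < r < \infty$, together with the pointwise equivalence \eqref{2.4}. The overall plan closely parallels the proof of Proposition \ref{P3.3}.

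For (i)(a), continuity of $C : d (p+) \lra d (q+)$ is equivalent, by the universal property of the projective limit, to continuity of $C : d (p+) \lra d (r)$ for each $r > q$. Fix such $r$; since $r > q \geq p$, the inclusion $d (p+) \subseteq d (r)$ is continuous, and composing with $C : d (r) \lra d (r)$ gives the claim. The same template, using $\ces (p+) \subseteq \ces (r)$ followed by $C : \ces (r) \lra d (r)$, yields continuity of $C : \ces (p+) \lra d (q+)$. For (ii)(a), the universal property of the inductive limit reduces matters to checking $C : d (r) \lra d (q\mbox{-})$ (respectively $C : \ces (r) \lra d (p\mbox{-})$) for each $r \in (1,p)$; both factor through $C$ on the Banach step $d (r)$ (resp.\ $\ces (r)$) followed by the continuous inclusion $d (r) \subseteq d (q\mbox{-})$ (resp.\ $d (r) \subseteq d (p\mbox{-})$) into the (LB)-space target.

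Parts (b) in both cases are then one-liners mirroring the proof of Proposition \ref{P3.3}(i)(b). The forward implication follows from (a) with $p = q$. For the converse in (i)(b): if $C^2 ( | x | ) \in d (p+)$ then $C^2 ( | x | ) \in d (r)$ for every $r > p$, and \eqref{2.4} yields $C ( | x | ) \in d (r)$ for every such $r$, i.e., $C ( | x | ) \in d (p+)$. For the converse in (ii)(b): $C^2 ( | x | ) \in d (p\mbox{-})$ means $C^2 ( | x | ) \in d (r)$ for some $r \in (1,p)$, whence \eqref{2.4} places $C ( | x | ) \in d (r) \subseteq d (p\mbox{-})$.

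Finally, for (i)(c) and (ii)(c), the containments $\ces (p+) \subseteq [C, d (p+)]_s$ and $\ces (p\mbox{-}) \subseteq [C, d (p\mbox{-})]_s$ are immediate from part (a) with $p = q$ together with the solidity of $\ces (p+)$ and $\ces (p\mbox{-})$ in $\C^{\N}$. For the reverse in (i)(c), let $X \subseteq \C^{\N}$ be any solid subspace with $C (X) \subseteq d (p+)$. For each $r > p$ one has $C (X) \subseteq d (r)$, so \eqref{2.5}, in the form $[C, d (r)]_s = \ces (r)$, yields $X \subseteq \ces (r)$; intersecting over $r$ gives $X \subseteq \ces (p+)$. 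For (ii)(c), the only subtlety is that the index must be chosen element-by-element. Given $x \in X$, pass to the smallest solid subspace $X_0$ containing $x$, namely $\{ y \in \C^{\N} : | y | \leq c | x | \text{ for some } c > 0 \}$; monotonicity of $C$ and solidity of $d (p\mbox{-})$ give $C (X_0) \subseteq d (p\mbox{-})$, so $C ( | x | ) \in d (r_x)$ for some $r_x < p$, and the same monotonicity-plus-solidity argument with $d (r_x)$ in place of $d (p\mbox{-})$ upgrades this to $C (X_0) \subseteq d (r_x)$. Then \eqref{2.5} forces $X_0 \subseteq \ces (r_x)$, and in particular $x \in \ces (r_x) \subseteq \ces (p\mbox{-})$. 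I expect this per-element passage to an individual step of the inductive limit to be the only point requiring care beyond the Fr\'{e}chet argument.
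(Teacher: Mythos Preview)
Your proof is correct and follows essentially the same projective/inductive-limit reduction to the Banach level as the paper. The only noteworthy difference is in the reverse inclusions of part (c): rather than invoking \eqref{2.5} at each step (and, in (ii)(c), your $X_0$-construction to localize $X$ into a single $d(r_x)$), the paper simply observes for a given $x\in X$ that $C(|x|)\in d(q)\subseteq \ces(q)$ (via \eqref{4.3}/\eqref{4.4}) and then applies the Bennett property \eqref{2.1} to conclude $x\in\ces(q)$ directly, which makes the element-by-element argument in the (LB) case shorter.
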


\begin{proof} \ (i) \ (a) \ The continuity of $ C : d (p+) \lra d (q+) $ follows from Lemma 2.5(i) of \cite{ABR5},
the definition of $ d (p+) $ and $ d (q+), $ and the  fact that $ C : d (r) \lra d (s)$ is continuous whenever
$ 1 < r \leq s < \infty ;$ see \cite[Proposition 5.3(iii)]{BR1}).

A similar argument applies to establish the continuity of $ C : \ces (p+) \lra d (q+),  $ where now it is needed that $ C : \ces (r) \lra d (s) $
is continuous whenever $ 1 < r \leq s < \infty $ (cf.\ \cite[Proposition 5.3(v)]{BR1}).

(b) \ The proof of part (b) in Proposition \ref{P3.3}(i) can easily be adapted to apply to this case (by using part (a) above and \eqref{2.4} there
in place of \eqref{2.3}).

(c) \ Clearly $ \ces (p+) \subseteq [C, d (p+)]_s $ as $ C $ maps $ \ces (p+)$ into $ d (p+)$ by part (a). On the other hand, let $ X \subseteq \C^{\N} $
be a solid space such that $ C (X) \subseteq d (p+).$ Given $ x \in X $ also $ | x | \in X $ and hence, $ C (| x | ) \in d (p+) \subseteq \ces (p+).$
By Proposition \ref{P2.2} of \cite{ABR1} we have that $ x \in \ces (p+).$  Accordingly, $ X \subseteq \ces (p+).$
This implies that $ [C, d (p+)]_s \subseteq \ces (p+). $

(ii) \ (a) \ The proof of part (a) in (i) above can be adapted to apply here by replacing  Lemma 2.5(i) of \cite{ABR5} used there  with
Lemma 17(i) of \cite{ABR2}.

(b) \ The proof of part (iv) of Proposition \ref{P3.4} can be modified by using part (a) above, the definition $ d (p\mbox{-}) = \bigcup_{ 1 < q < p}  d(q) $ and applying
\eqref{2.4} in place of \eqref{2.3}.

 (c) \ Using part (a) above, the proof of part (v) of Proposition \ref{P3.4} can be adapted to fit the present setting.
 \end{proof}

\begin{Rem} \label{R4.10} {\rm (i) \  Since $ C : \ell_r \lra d (s) $  is continuous  whenever $ 1 < r \leq s < \infty , $
\cite[Proposition 5.3(iv)]{BR1}, it follows from Lemma 2.5(i) of \cite{ABR5}, resp. Lemma 17(i) of \cite{ABR2}, that $ C : \ell_{p+} \lra d (q+) $
is continuous whenever $ 1 \leq p \leq q < \infty ,$ resp. $ C : \ell_{p_{\mbox{-}}} \lra d (q\mbox{-})  $  is continuous   whenever $ 1 < p \leq q  \leq  \infty .$

(ii) \ The analogue of the stronger version of the ``Bennett property'' for $ C $ acting in $ \ces(p), 1 < p < \infty ,$ as it is  stated in
\eqref{2.1}, is known to also hold for $ C $ acting in $ \ces (p+),$  \cite[Proposition 2.2]{ABR1}, and for $ C $ acting in $ \ces (p\mbox{-}), 1 < p \leq \infty , $
\cite[Proposition 1(i)]{ABR2}. However, it \textit{fails}\/ to hold for $ C $ acting in $ \ell_{p+}, d(p+)$ and in $ \ell_{p_{\mbox{-}}}, d(p\mbox{-}).  $

Indeed, for $ \ell_{p+} $ with $ 1 \leq p < \infty ,$ see \cite[Proposition 2.4]{ABR1}. For $ \ell_{p_{\mbox{-}}} $ with $ 1 < p \leq \infty ,$ choose $ 0 \leq x \in \ces (p\mbox{-})
\backslash \ell_{p_{\mbox{-}}} $ (possible as the containment \eqref{3.10} is proper when  $ q = p$) and note that $ C ( | x | ) = C (x) \in \ell_{p_{\mbox{-}}} $ by Proposition \ref{P3.4}(ii).
So, it does \textit{not}\/ follow from $ C ( | u | ) \in \ell_{p_{\mbox{-}}} $ that necessarily $ u \in \ell_{p_{\mbox{-}}} .$

Concerning $ d (p+) $ with $ 1 \leq p < \infty , $ it follows from \eqref{4.4} and the ensuing discussion that there exists $ 0 \leq x \in \ell_{p+} \bsl d(p+).$
Then Proposition \ref{P4.9}(i)(a) implies that $ C ( |x| ) = C (x) \in d (p+). $ So, $ C ( | u | ) \in d (p+) $ need \textit{not}\/ imply that $ u \in d (p+).$
Finally, for $ d (p\mbox{-}), 1 < p  \leq \infty,$ choose $ 0 \leq x \in \ell_{p_{\mbox{-}}} \bsl d (p\mbox{-}),$ which is possible via \eqref{4.6} and the ensuing
discussion, and note that $ C ( | x |) = C (x) \in d (p\mbox{-})$ by part (i) of this remark. That is, it does \textit{not}\/ follow from $ C ( | u | ) \in d (p\mbox{-})$
that necessarily $ u \in d (p\mbox{-}).$

For a more general version of the {\em Bennett property} for Banach spaces see \cite{CR2}. } \hfill $\Box$
\end{Rem}
We conclude with some comments about the spaces occuring in this paper when they are considered as \textit{locally solid, lc-Riesz spaces}. The
standard reference on this topic (for real spaces) is \cite{AB}; for complex spaces see \cite{Z}.

Let $ 1 \leq p < \infty .$ Then $ \ces (p+),$ resp.  $ \ell_{p+},$ is the complexification of the corresponding real Riesz space $ \ces_{\R}   (p+)
:= \{ x \in \ces (p+) : x =  ( x_n ) _n \in \R^{\N}\}, $ resp.  $ ( \ell_{p+})_{\R}  := \{ x \in \ell_{ p+} : x = (x_n) _n \in \R^{\N}  \} ,$ where
the order in the real spaces is defined coordinatewise. The Fr\'{e}chet lattices $ \ces (p+), \ell_{p+} $ are \textit{Dedekind complete}, that is, every subset
of $ \ces_{\R} (p+), ( \ell_{p+} ) _{\R} $ which is bounded from above in the order sense has a least upper bound. Moreover, being \textit{reflexive}, each of the
(separable) Fr\'{e}chet lattices $ \ces ( p+), \ell_{p+}, $ for $ p \in [1, \infty ), $ has a \textit{Lebesgue topology}, that is, if $ x ^{(\alpha)} \downarrow 0 $
is a decreasing net in the order of $ \ces (p+), \ell_{p+} ,$ then $ \lim_{\alpha} x ^{ (\alpha)} = 0$ in the topology of $ \ces (p+), \ell_{ p+}.$
In addition, the order intervals in $ \ces (p+), \ell_{ p+} $ are topologically complete. Each Fr\'{e}chet lattice $ \ces (p+), $ for $ p \in [1, \infty ), $ is \textit{Montel},
which is \textit{not}\/ the case for $ \ell_{ p+}, $ for $  p \in [ 1, \infty ).$ For these notions and facts (and additional properties) we refer to
\cite[Section 4]{ABR1}. All of the properties needed for establishing the above facts for $ \ces (p+) $ in \cite{ABR1} are also available for $ d (p+). $ So, each space
$ d (p+), $ for $  p \in [ 1, \infty ),$ is a  (Montel) locally convex  Fr\'{e}chet lattice which is Dedekind complete, has a Lebesgue topology and its order intervals are
topologically complete.

Various properties of the associated (LB)-spaces $ \ell_{ p_{\mbox{-}}} , \ces (p\mbox{-}), $ for $ 1 < p \leq \infty ,$ considered as locally solid,  lc-Riesz spaces, occur in
\cite[Section 6]{ABR2}. The (LB)-spaces $ \ell_{p_{\mbox{-}}} $ are reflexive (but,  not Montel), Dedekind complete, have a Lebesgue topology and their  order intervals are topologically
complete. Each space $ \ces (p\mbox{-}), 1 < p \leq \infty ,$ has the same properties just listed for $ \ell_{p_{\mbox{-}}} $ and, in addition, is Montel. All of the properties needed
for establishing the above facts in \cite{ABR2} for $ \ces (p\mbox{-})$ are also available for $ d (p\mbox{-}).$ Hence,
each space $ d (p\mbox{-}), 1 < p \leq \infty ,$ is a Montel, locally solid,
lc-Riesz space which is Dedekind complete, has a Lebesgue topology and its order intervals are topologically complete. \\

\textbf{Acknowledegment.} \ The research of J.\ Bonet was partially supported by the projects MTM2016-76647-P and GV Prometeo/2017/102 (Spain).

\end{document}